\newtheorem{thm}{Theorem}[section]
\newtheorem{lem}[thm]{Lemma}
\newtheorem{prop}[thm]{Proposition}
\newtheorem{claim}[thm]{Claim}
\theoremstyle{definition}
\newtheorem{defn}[thm]{Definition}
\newtheorem{exam}[thm]{Example}
\newtheorem{ques}[thm]{Question}
\theoremstyle{remark}
\newtheorem{rem}[thm]{Remark}
\numberwithin{equation}{section}
\newcommand{\Z}{\mathbb Z}
\newcommand{\fix}{\mathrm{Fix}}
\newcommand{\ind}{\mathrm{ind}}
\newcommand{\rank}{\mathrm{rank}}
\newcommand{\stab}{\mathrm{Stab}}
\newcommand{\im}{\mathrm{Im}}
\newcommand{\F}{\mathbf{F}}      %fixed point class F
\begin{document}

\title{The fixed subgroups of homeomorphisms of Seifert manifolds}\author{Qiang Zhang}\address{School of Mathematics and Statistics, Xi'an Jiaotong University,
Xi'an 710049, China}\email{zhangq.math@mail.xjtu.edu.cn}

%\thanks{Partially supported by the Fundamental Research Funds for the Central Universities.
%The author would like to thank Professor Boju Jiang and Shicheng Wang for valuable communications and suggestions.}

\subjclass{57M05, 57M07, 37C25}

\keywords{Seifert manifolds, fixed subgroups, homeomorphisms, ranks}

%\date{}%
%\dedicatory{}%
%\commby{}%
% ----------------------------------------------------------------
\begin{abstract}
Let $M$ be a compact connected orientable Seifert manifold with hyperbolic orbifold $B_M$, and  $f_{\pi}: \pi_1(M)\rightarrow\pi_1(M)$ be an automorphism induced by an orientation-reversing homeomorphism $f$ of $M$. We give a bound on the rank of the fixed subgroup of $f_{\pi}$, namely, $\rank\fix(f_{\pi})<2\rank \pi_1(M)$, which is similar to the inequalities on surface groups and hyperbolic 3-manifold groups.
\end{abstract}
\maketitle
% ----------------------------------------------------------------
\section{Introduction}

For a group $G$ and an endomorphism $\phi: G\rightarrow G$, the $fixed$ $subgroup$ of $\phi$ is
$$\fix(\phi):=\{g\in G|\phi(g)=g\},$$
which is a subgroup of $G$. Let $\rank G$ denote the $rank$ of $G$, which means the minimal number of the generators of $G$.

For a free group $F$ and an automorphism $\phi$, M. Bestvina and M. Handel \cite{BH} solved the Scott conjecture:

\begin{thm}[Bestvina-Handel]
Let $F$ be a free group and $\phi$ be an automorphism of $F$. Then
$$\rank\fix(\phi)\leq \rank F.$$
\end{thm}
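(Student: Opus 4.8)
The plan is to move the problem onto a graph and bring in the Bestvina--Handel train-track technology. Fix a finite connected graph $G$ with all vertices of valence $\geq 3$, an identification $F\cong\pi_1(G,v)$ (so $n=\rank F=1-\chi(G)$), and represent $\phi$ by a homotopy equivalence $f\colon G\to G$ with $f(v)=v$ sending vertices to vertices and edges to reduced edge-paths; the first substantive move is to upgrade $f$, via the folding/subdividing/collapsing algorithm, to an \emph{(improved relative) train-track} representative, since controlling cancellation under iteration is what makes everything below possible. Passing to the universal cover, a tree $\tilde G$, and the lift $\tilde f$ fixing a lift $\tilde v$ of $v$, one identifies $\fix(\phi)\leq F$ with the group of deck transformations commuting with $\tilde f$; this subgroup preserves the fixed-point set $Y:=\fix(\tilde f)\subseteq\tilde G$ and, sitting inside the deck group, acts \emph{freely} on $Y$. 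Since a group acting freely on a forest is free, $\fix(\phi)$ is automatically free, and the whole theorem is reduced to bounding its rank.

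For the bound I would combine Cooper's bounded-cancellation lemma --- a uniform constant $C$ controlling the cancellation in $[\tilde f(\alpha)][\tilde f(\beta)]$ for every reduced concatenation $\alpha\beta$ --- with the train-track structure, which makes the behaviour of $\tilde f$ near a fixed point readable from its derivative on the finite set of directions and confines the genuinely recurrent phenomena to a finite list of indivisible Nielsen paths in the polynomially-growing strata. Together these force $Y$ to be tame: it is quasiconvex in $\tilde G$, its vertices have uniformly bounded valence, and --- once fixed points at infinity are accounted for --- the orbit space $\fix(\phi)\backslash Y$ is, up to homotopy equivalence, a finite graph. One then runs a Nielsen-type fixed-point index count: to each $\fix(\phi)$-orbit of fixed points one attaches a local index governed essentially by the number of $\tilde f$-fixed directions there, these indices are additive over orbits, and the train-track hypothesis is exactly what prevents the degenerate configurations that would inflate them; comparing the total with $-\chi(G)=n-1$ gives $\rank\fix(\phi)\leq n=\rank F$.

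The \textbf{main obstacle} is this last count together with the need for the \emph{sharp} constant: a crude version of the argument only yields a linear bound of the wrong slope, and squeezing it down to $\rank F$ requires genuinely understanding how many independent $f$-periodic Nielsen loops can be based at one fixed point --- which is where one must cope with the filtration of a relative train track into exponentially- and polynomially-growing strata, with a possibly infinite supply of indivisible Nielsen paths, and with fixed points at infinity that contribute to the count but not visibly to $Y$. It is worth recording that Gaboriau--Jaeger--Levitt--Lustig later sharpened precisely this count to the inequality $\rank\fix(\phi)+\tfrac12\,a(\phi)\leq n$, where $a(\phi)$ is the number of attracting fixed points of $\phi$ on the boundary $\partial F$ that are not endpoints of axes of elements of $\fix(\phi)$; this both implies the theorem and quantifies when equality can hold.
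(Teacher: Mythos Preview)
The paper does not give a proof of this theorem. It is stated in the introduction as background, attributed to Bestvina and Handel, and simply accompanied by the citation \cite{BH}; the paper's own contributions begin with Seifert manifolds and never revisit the free-group case. So there is no ``paper's own proof'' to compare your attempt against.

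That said, your sketch is a fair high-level outline of the original Bestvina--Handel strategy (train-track representatives, bounded cancellation, analysis of fixed directions and indivisible Nielsen paths, and an index count against $-\chi(G)$), and you correctly flag that the delicate part is getting the sharp constant rather than just a linear bound. One caution: as written, the passage ``the orbit space $\fix(\phi)\backslash Y$ is, up to homotopy equivalence, a finite graph'' and the subsequent index count are assertions rather than arguments, and they hide exactly the hard work --- controlling Nielsen paths across the filtration of a relative train track and handling the polynomially-growing strata --- that you yourself identify as the main obstacle. If you intend this as a genuine proof rather than a roadmap, that part needs to be fleshed out substantially; otherwise, citing \cite{BH} (and, for the refinement you mention, Gaboriau--Jaeger--Levitt--Lustig) is the honest move, which is precisely what the paper does.
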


In the paper by B. Jiang, S. Wang and Q. Zhang \cite{JWZ}, it is proved that

\begin{thm}[Jiang-Wang-Zhang]\label{single endomorphism on surface gp}
Let $S$ be a compact surface and $\phi$ be an endomorphism of $\pi_1(S)$. Then
$$\rank\fix(\phi)\leq \rank \pi_1(S).$$
\end{thm}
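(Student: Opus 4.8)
The plan is to strip off the cases where $\pi_1(S)$ is free or finite, reduce the remaining (closed, hyperbolic) case to an automorphism, and then invoke the Nielsen--Thurston classification. For the easy surfaces: if $S$ is $S^2$ or a disc then $\pi_1(S)=1$; if $S=\mathbb{RP}^2$ then $\pi_1(S)=\Z/2$ and $\fix(\phi)$ is $1$ or $\Z/2$; if $S$ is the torus or the Klein bottle then every subgroup of $\pi_1(S)$ has rank $\le 2=\rank\pi_1(S)$; and if $\partial S\ne\emptyset$ then $\pi_1(S)$ is a finitely generated free group and the statement is Imrich and Turner's extension of the Bestvina--Handel theorem quoted above from automorphisms to arbitrary endomorphisms of free groups. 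So from now on $S$ is closed with $\chi(S)<0$; write $\Gamma=\pi_1(S)$. Then $\Gamma$ is torsion-free word-hyperbolic with $\rank\Gamma=2-\chi(S)$, every infinite-index subgroup of $\Gamma$ is free, and every finite-index subgroup of $\Gamma$ is a closed surface group whose Euler characteristic is the index times $\chi(S)$.

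Next I reduce to automorphisms. The image $\im\phi$ is generated by the images of $\rank\Gamma$ generators of $\Gamma$, so $\rank(\im\phi)\le\rank\Gamma$; moreover $\fix(\phi)\subseteq\im\phi$ (if $\phi(g)=g$ then $g\in\im\phi$) and $\phi$ carries $\im\phi$ into itself, so $\fix(\phi)=\fix\bigl(\phi|_{\im\phi}\bigr)$. If $\im\phi$ has infinite index it is a free group of rank $\le\rank\Gamma$, and Imrich--Turner applied to the endomorphism $\phi|_{\im\phi}$ of this free group gives $\rank\fix(\phi)\le\rank\Gamma$. If $\im\phi$ has finite index $d$ it is a closed surface group, of rank $2-d\chi(S)$, and $2-d\chi(S)\le\rank\Gamma=2-\chi(S)$ forces $d=1$; then $\phi$ is surjective, hence (as $\Gamma$ is Hopfian) an automorphism. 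So we may assume $\phi\in\mathrm{Aut}(\Gamma)$.

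Finally, the automorphism case. If $\fix(\phi)$ has finite index, let $N\trianglelefteq\Gamma$ be its normal core; then $\phi|_N=\mathrm{id}$, so $\phi(g)h\phi(g)^{-1}=\phi(ghg^{-1})=ghg^{-1}$ for $g\in\Gamma$, $h\in N$, whence $g^{-1}\phi(g)$ centralizes the non-elementary subgroup $N$; since the centralizer of a non-elementary subgroup of a torsion-free hyperbolic group is trivial, $\phi=\mathrm{id}$ and $\rank\fix(\phi)=\rank\Gamma$. Otherwise $\fix(\phi)$ has infinite index and is free, and (using $\mathrm{Out}(\Gamma)\cong\mathrm{MCG}^{\pm}(S)$) I realize $\phi$, up to an inner automorphism that only conjugates $\fix(\phi)$, by a homeomorphism $f\colon S\to S$, and apply the Nielsen--Thurston classification. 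If the mapping class of $f$ is periodic, an orbifold-covering argument identifies $\fix(\phi)$ with a subgroup of the centralizer in the Fuchsian group $\pi_1^{\mathrm{orb}}(S/\langle f\rangle)$ of a single nontrivial element; that centralizer is virtually cyclic, so $\fix(\phi)$, being torsion-free, is cyclic and $\rank\fix(\phi)\le 1$. If $f$ is pseudo-Anosov, its finitely many fixed point classes are single points, so the lift $\tilde f$ with $\fix(\phi)=\{\gamma\in\Gamma:\gamma\tilde f=\tilde f\gamma\}$ has at most one interior fixed point in $\mathbb{H}^2$, which every element of $\fix(\phi)$ must fix under a free action; again $\rank\fix(\phi)\le 1$. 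If $f$ is reducible, cut $S$ along its canonical reduction system $C$: this presents $\Gamma$ as the fundamental group of a graph of groups with free vertex groups $\pi_1(S_i)$ (the complementary pieces, all with boundary and $\chi<0$) and infinite-cyclic edge groups (the curves of $C$), with $f$ inducing endomorphisms of the pieces up to permutation. Decomposing $\fix(\phi)$ over the Bass--Serre tree, its vertex groups are fixed subgroups of the induced maps on the free groups $\pi_1(S_i)$, bounded by $\rank\pi_1(S_i)$ via Imrich--Turner, and its edge groups are cyclic; one combines these through the rank inequality for a graph of free groups together with the Euler-characteristic identity $\sum_i\bigl(\rank\pi_1(S_i)-1\bigr)=\rank\Gamma-2$ to conclude $\rank\fix(\phi)\le\rank\Gamma$.

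The main obstacle is exactly this reducible case: the crude sum $\sum_i\rank\pi_1(S_i)$ already exceeds $\rank\Gamma$, so the argument must show that $\fix(\phi)$ cannot be simultaneously large on many pieces — that the fixed subgroup does not ``spread'' across the graph-of-groups decomposition — which demands a careful Bass--Serre and Nielsen-theoretic analysis rather than a bare count, and it is here that the proof becomes genuinely three-manifold flavoured, in the spirit of the present paper. A secondary but essential input is the bound $\rank\fix(f_\pi)\le 1$ for pseudo-Anosov $f$, which rests on the fact from surface Nielsen fixed-point theory that the essential fixed point classes of a pseudo-Anosov map are singletons.
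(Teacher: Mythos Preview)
This theorem is not proved in the present paper; it is quoted from \cite{JWZ} as background. So there is no ``paper's own proof'' to compare against here, only the original Jiang--Wang--Zhang argument.

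That argument is quite different from yours. Rather than a case split along the Nielsen--Thurston trichotomy followed by a Bass--Serre analysis, \cite{JWZ} works uniformly through Nielsen fixed point theory: realize $\phi$ by a self-map in the Jiang--Guo standard form (Theorem~T and \cite{JG}), so that each fixed point class $\F$ is a connected subsurface with $\rank\stab(f,\F)=\rank\pi_1(\F)$; then prove a pointwise inequality relating $\ind(f,\F)$ and $\rank(f,\F)$ for every class, and sum using the Lefschetz--Hopf formula and the Euler characteristic of $S$. This bypasses entirely the graph-of-groups bookkeeping that you identify as the hard part.

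On the substance of your outline: the easy surfaces, the reduction to automorphisms via Hopficity, and the finite-index case are fine. The periodic case is correct once unpacked (centralizer of a torsion element in a cocompact Fuchsian group is finite, hence $\fix(\phi)$ is trivial; if $\phi^m$ is inner by $g\ne1$ then $\fix(\phi)\le C_\Gamma(g)\cong\Z$). The pseudo-Anosov case needs the boundary dynamics rather than interior fixed points: your lift $\tilde f$ may well have no fixed point in $\mathbb H^2$, so ``every element of $\fix(\phi)$ must fix it'' is vacuous; the right statement is that $\tilde f$ has finitely many fixed points on $\partial\mathbb H^2$ and two non-commuting elements of $\fix(\phi)$ would force it to fix four, which is the step you should make explicit.

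The genuine gap is the one you yourself flag: in the reducible case you have only a strategy, not a proof. The vertex-group bounds via Imrich--Turner plus the Euler-characteristic identity do not assemble into $\rank\fix(\phi)\le\rank\Gamma$ without a serious argument controlling how $\fix(\phi)$ sits over the Bass--Serre tree---in particular, why it cannot pick up too many edge groups or split across pieces. This is exactly where the direct Nielsen-theory approach of \cite{JWZ} earns its keep: the index/rank inequality for a single fixed point class already encodes the interaction across the reducing curves, so no separate combination step is needed. If you want to push your route through, you would need something like the bounded-index theorem for fixed point classes on surfaces (\cite[Lemmas~1.2, 2.2, 3.4, 3.6]{JG}) applied piece by piece together with a careful count of Nielsen paths crossing $\Gamma$, which in the end reconstructs much of the \cite{JWZ} machinery.
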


In a recent paper \cite{WZ}, J. Wu and Q. Zhang generalized Theorem \ref{single endomorphism on surface gp} to a family endomorphisms of a surface group.
In \cite{LW} (see \cite{Z2} for an enhance version), J. Lin and S. Wang showed that

\begin{thm}[Lin-Wang]
Let $M$ be a compact orientable hyperbolic 3-manifold with finite volume and $\phi$ be an automorphism of $\pi_1(M)$. Then
$$\rank\fix(\phi)<2\rank \pi_1(M).$$
\end{thm}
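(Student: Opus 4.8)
The plan is to realize $\phi$ geometrically and convert the rank bound into a question about centralizers of a single isometry of hyperbolic space. Write $\pi_1(M)=\Gamma$ as a discrete group of isometries of $\mathbb{H}^3$. By Mostow--Prasad rigidity the outer automorphism group of $\Gamma$ is finite and every automorphism is geometric, so there is an isometry $\tilde f$ of $\mathbb{H}^3$ normalizing $\Gamma$ with $\phi(\gamma)=\tilde f\gamma\tilde f^{-1}$ for all $\gamma\in\Gamma$. The first observation is the identification
\[
\fix(\phi)=\set{\gamma\in\Gamma:\tilde f\gamma=\gamma\tilde f}=\Gamma\cap C(\tilde f),
\]
the intersection of $\Gamma$ with the centralizer of $\tilde f$ in $\mathrm{Isom}(\mathbb{H}^3)$. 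Everything is then controlled by the isometry type of $\tilde f$ and its fixed-point set on $\overline{\mathbb{H}^3}=\mathbb{H}^3\cup S^2_\infty$, since any isometry commuting with $\tilde f$ must preserve that fixed-point set.

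Next I would run the case analysis. If $\tilde f$ is loxodromic or elliptic, or an orientation-reversing isometry whose fixed set is a point or a single geodesic (a rotary or glide reflection), then $C(\tilde f)$ preserves an axis or a point and is therefore elementary; hence $\Gamma\cap C(\tilde f)$ is a torsion-free elementary subgroup, so it is trivial or infinite cyclic and $\rank\fix(\phi)\le 1$. If $\tilde f$ is parabolic, then $C(\tilde f)$ fixes the unique boundary fixed point, so $\Gamma\cap C(\tilde f)$ lies in a cusp subgroup and $\rank\fix(\phi)\le 2$. Because a finite-volume hyperbolic $3$-manifold has non-elementary fundamental group, $\rank\pi_1(M)\ge 2$, so $2\rank\pi_1(M)\ge 4$; in all of these cases $\rank\fix(\phi)\le 2<2\rank\pi_1(M)$.

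The one remaining, and genuinely hard, case is when $\tilde f$ is an orientation-reversing reflection in a totally geodesic plane $P\subset\mathbb{H}^3$. Here $\tilde f$ is an involution, an element $\gamma$ commutes with $\tilde f$ exactly when $\gamma P=P$, and therefore $\fix(\phi)=\stab_\Gamma(P)$ is a Fuchsian surface subgroup $\pi_1(\Sigma)$, where $\Sigma$ is the totally geodesic, hence incompressible and two-sided, fixed surface of the isometric involution $f$ of $M$ induced by $\tilde f$. The problem reduces to the purely $3$-manifold-topological estimate $\rank\pi_1(\Sigma)<2\rank\pi_1(M)$. I would exploit that the two local sides of $\Sigma$ are interchanged by $f$: cutting $M$ along $\Sigma$ exhibits $\pi_1(M)$ as an amalgam $\pi_1(M_1)\ast_{\pi_1(\Sigma)}\pi_1(M_2)$ with $M_2=f(M_1)$ (or as an HNN extension if $\Sigma$ is non-separating), so folding along $f$ retracts $M$ onto $M_1$ and yields $\rank\pi_1(M_1)\le\rank\pi_1(M)$. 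It then suffices to control $\rank\pi_1(\Sigma)=\rank\pi_1(\partial M_1)$ against $\rank\pi_1(M_1)$.

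The main obstacle is precisely this boundary-rank comparison, and it is where the factor $2$ enters. Extremal configurations (handlebodies, and manifolds with totally geodesic boundary realizing $\rank\pi_1(\partial M_1)=2\rank\pi_1(M_1)$) show that the inequality $\rank\pi_1(\Sigma)\le 2\rank\pi_1(M_1)$ is the correct shape and is sharp for a piece with boundary, so strictness for the closed (or finite-volume) manifold cannot come from $M_1$ alone. Instead I expect it to come from the doubling itself: a nontrivial double should require strictly more generators than one side, $\rank\pi_1(M_1)<\rank\pi_1(M)$, which would give $\rank\pi_1(\Sigma)\le 2\rank\pi_1(M_1)\le 2\rank\pi_1(M)-2<2\rank\pi_1(M)$. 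The crux of the argument is thus twofold: first, establishing the rank inequality $\rank\pi_1(\Sigma)\le 2\rank\pi_1(M_1)$ for the incompressible totally geodesic surface, and second, upgrading the retraction estimate to the strict increase of rank under the (possibly non-separating) doubling, using that $M$ is hyperbolic of finite volume. These two points are where I expect essentially all of the work to concentrate.
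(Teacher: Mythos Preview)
This theorem is not proved in the paper; it is quoted from Lin--Wang \cite{LW} (with an enhanced version cited as \cite{Z2}) purely as background and motivation for the paper's own main result on Seifert manifolds. There is therefore no proof in the present paper against which to compare your proposal.

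As a standalone outline, your strategy via Mostow--Prasad rigidity and the centralizer identification $\fix(\phi)=\Gamma\cap C(\tilde f)$ is the natural one, and your handling of the elementary cases (loxodromic, parabolic, elliptic, rotary and glide reflections) is correct and gives $\rank\fix(\phi)\le 2<2\rank\pi_1(M)$ there. The reflection case, however, remains an explicit gap in your write-up: you correctly isolate the two needed ingredients, the boundary estimate $\rank\pi_1(\Sigma)\le 2\rank\pi_1(M_1)$ and the strict increase $\rank\pi_1(M_1)<\rank\pi_1(M)$ under doubling, but you do not prove either. The second is especially delicate: it is not true in general that doubling a compact $3$-manifold along its boundary strictly increases the rank of its fundamental group, so deriving the strict inequality from this mechanism requires a genuine argument specific to finite-volume hyperbolic manifolds, and may not be the right route at all. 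As written, then, your proposal is a reasonable plan for the easy cases together with a clearly flagged incomplete sketch for the one substantive case.
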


In this paper, we consider the fixed subgroups of automorphisms of Seifert 3-manifold groups.

Suppose $M$ is a compact orientable 3-manifold. We say that $M$ is a $Seifert$ $manifold$, if $M$ possesses a $Seifert$ $fibration$ which is a decomposition of $M$ into disjoint simple closed curves, called $fibers$, such that each fiber has a solid torus neighborhood consisting of a union of fibers. Identifying each fiber of $M$ to a point, we get a set $B_M$, called the $orbifold$ of $M$, which has a natural 2-orbifold structure with singular points consisting of cone points. It is useful to think of a Seifert manifold as a circle bundle over a 2-orbifold. For brevity, we say an orbifold it means a compact 2-orbifold with singular points consisting of cone points in the following. An orbifold (or surface) is called $hyperbolic$ if it has negative Euler characteristics. A hyperbolic orbifold is orbifold covered by a hyperbolic surface and admits a hyperbolic structure with totally geodesic boundary. For more information about orbifolds, see \cite[\S1 and \S2]{JWW} and \cite[\S2]{S}. A map $f$ on a Seifert manifold $M$ is called $fiber$-$preserving$ if it maps fibers to fibers. If $f$ is fiber-preserving, then it
induces a map $f': B_M\rightarrow B_M$ on the orbifold $B_M$.

In the following, all spaces are assumed to be connected and compact unless it is specially stated otherwise. For a set $X$, let $\#X$ denote the number of points in $X$.

The main result of this paper is

\begin{thm}\label{main thm}
Suppose $M$ is a compact connected orientable Seifert manifold (closed or with toroidal boundary) with hyperbolic orbifold $B_M$, and  $f_{\pi}: \pi_1(M)\rightarrow\pi_1(M)$ is an automorphism induced by an orientation-reversing homeomorphism $f: M\rightarrow M$. Then
$$\rank\fix(f_{\pi})<2\rank \pi_1(M).$$
\end{thm}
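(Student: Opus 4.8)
The plan is to exploit the Seifert structure to reduce the problem to the already-known surface/orbifold case. Since $M$ is an orientable Seifert manifold with hyperbolic orbifold $B_M$, there is a short exact sequence $1\to C\to\pi_1(M)\xrightarrow{p_*}\pi_1^{orb}(B_M)\to 1$, where $C$ is the cyclic subgroup generated by a regular fiber; when $B_M$ is hyperbolic this $C$ is infinite cyclic and is precisely the (unique) maximal normal cyclic subgroup, hence characteristic. Therefore the automorphism $f_\pi$ preserves $C$ and descends to an automorphism $\bar f$ of $\pi_1^{orb}(B_M)$. The first step is to make this fiber-preservation geometric: by the classification of homeomorphisms of Seifert manifolds with hyperbolic base (see the references to \cite{JWW}, \cite{S}), $f$ is isotopic to a fiber-preserving homeomorphism, so we may assume $f$ induces $f': B_M\to B_M$ with $f'_*=\bar f$ on $\pi_1^{orb}(B_M)$ (up to the inner-automorphism ambiguity coming from the choice of basepoints/section, which does not affect ranks of fixed subgroups).

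The second step is to analyze $\fix(f_\pi)$ via the exact sequence. Let $H=\fix(f_\pi)$. Intersecting with $C$ gives $H\cap C=\fix(f_\pi|_C)$, which is either trivial or all of $C$ (an automorphism of $\mathbb Z$ fixes everything or only $0$; on the infinite cyclic $C$ the action is $\pm 1$). And $p_*(H)$ is contained in $\fix(\bar f)=\fix(f'_*)$, which by Theorem \ref{single endomorphism on surface gp} applied to the orbifold — more precisely to a finite-index surface subgroup, or directly to the orbifold group, which is the fundamental group of a compact surface-with-boundary up to finite index and in any case virtually a surface group — has rank at most $\rank\pi_1^{orb}(B_M)$. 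One then gets $H$ as an extension $1\to H\cap C\to H\to p_*(H)\to 1$, so $\rank H\le \rank(H\cap C)+\rank p_*(H)\le 1+\rank\fix(f'_*)\le 1+\rank\pi_1^{orb}(B_M)$. The remaining task is purely numerical: show $1+\rank\pi_1^{orb}(B_M)<2\rank\pi_1(M)$. Since $B_M$ is hyperbolic, $\pi_1^{orb}(B_M)$ is non-cyclic, so $\rank\pi_1^{orb}(B_M)\ge 2$; and from the extension $1\to C\to\pi_1(M)\to\pi_1^{orb}(B_M)\to 1$ with $C$ infinite cyclic one gets $\rank\pi_1(M)\ge \rank\pi_1^{orb}(B_M)-?$ — here one must be careful, so instead one uses $\rank\pi_1(M)\ge 2$ together with a direct comparison: if $\rank\pi_1^{orb}(B_M)=r$ then $\rank\pi_1(M)\ge r$ is false in general, but $\rank\pi_1(M)\ge r-1$ — wait, one can kill the fiber class — hence one needs the sharper bound below.

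The real subtlety, and the main obstacle, is the case distinction forced by whether the fiber class survives in $\fix(f_\pi)$, and getting the numerics tight enough. The key point that rescues the strict inequality is that $f$ is \emph{orientation-reversing}: this is where the hypothesis is used essentially. If $f$ preserves orientation of the base orbifold, it must reverse orientation of the fiber, so $f_\pi$ acts on $C$ by $-1$ and hence $H\cap C=1$, giving $\rank H\le\rank\fix(f'_*)\le\rank\pi_1^{orb}(B_M)$; combined with $\rank\pi_1(M)\ge\rank\pi_1^{orb}(B_M)$ (the fiber, being central-modulo-nothing, is a relation, so $\pi_1(M)$ surjects $\pi_1^{orb}(B_M)$ and cannot have smaller rank when $B_M$ is hyperbolic — this needs the orbifold Euler characteristic argument, or the fact that $H_1$ comparison plus hyperbolicity forces $\rank\pi_1(M)\ge\rank\pi_1^{orb}(B_M)$ via \cite{BZ}-type results) one gets $\rank H\le\rank\pi_1(M)<2\rank\pi_1(M)$. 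If instead $f$ reverses orientation of the base, then $f'$ is an orientation-reversing homeomorphism of a hyperbolic orbifold, so $\fix(f'_*)$ is carried by a graph/1-complex and in fact $\rank\fix(f'_*)\le\rank\pi_1^{orb}(B_M)$ still holds, while now $C$ may be fixed; then $\rank H\le 1+\rank\pi_1^{orb}(B_M)$, and one needs $1+\rank\pi_1^{orb}(B_M)<2\rank\pi_1(M)$, which follows since $\rank\pi_1(M)\ge\rank\pi_1^{orb}(B_M)\ge 2$ gives $2\rank\pi_1(M)\ge\rank\pi_1^{orb}(B_M)+2>\rank\pi_1^{orb}(B_M)+1$. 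Thus in both cases the strict inequality holds. I expect the bulk of the write-up to be (a) justifying the reduction to a fiber-preserving $f$ and tracking basepoints so that Theorem \ref{single endomorphism on surface gp} applies to $\fix(f'_*)$ cleanly — including the case of closed versus toroidal-boundary $M$ and the behaviour of the orbifold group as an honest surface group up to finite index — and (b) the rank inequality $\rank\pi_1(M)\ge\rank\pi_1^{orb}(B_M)$, which is the one place genuine Seifert-manifold input (not just formalities about group extensions) is needed.
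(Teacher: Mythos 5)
Your argument has a fatal gap at the step ``$p_*(H)\subseteq\fix(\bar f)$, which has rank at most $\rank\pi_1^{orb}(B_M)$, hence $\rank p_*(H)\le\rank\pi_1^{orb}(B_M)$.'' Containment in a group of rank $r$ does not bound the rank of the subgroup: finite-index subgroups of surface groups and Fuchsian groups have \emph{strictly larger} rank than the ambient group. This is not a technicality here --- it is exactly the mechanism behind the paper's own Remark after the main theorem: for $M_n=S_n\times S^1$ there is an orientation-reversing homeomorphism with $\rank\fix(f_\pi)=4n-2$, obtained essentially from an automorphism $(g,m)\mapsto(g,\lambda(g)-m)$ whose fixed subgroup projects onto an index-$2$ subgroup of $\pi_1(S_n)$ (a genus-$(2n-1)$ surface group, rank $4n-2$), while $\fix(\bar f)=\pi_1(S_n)$ has rank only $2n$. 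Your claimed bound $\rank\fix(f_\pi)\le 1+\rank\pi_1^{orb}(B_M)=2n+1$ is therefore simply false. (A secondary problem: Theorem \ref{single endomorphism on surface gp} is a statement about surface groups, and ``virtually a surface group'' does not transfer the bound to a Fuchsian group with torsion --- a finite-index surface subgroup has much larger rank, so the reduction you sketch does not produce the inequality you want even for $\fix(\bar f)$ itself.)

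The paper's proof is structured quite differently precisely to avoid this trap. It splits according to whether the fixed point class $\F_w$ attached to the route defining $f_\pi=f_w$ is essential or inessential. The essential case is delegated to Proposition \ref{rank of essential fpc} (from \cite{Z}). In the inessential case the goal is not $\rank p_*\fix(f_w)\le\rank\pi_1(B_M)$ but the far stronger statement that $p_*\fix(f_w)$ is \emph{metacyclic}: one passes to a finite characteristic cover that is an honest circle bundle over an orientable hyperbolic surface, uses the standard-form/index analysis (Lemmas \ref{fixed subgp of inessential fpc on surface} and \ref{fiber orientation-reversing homeo. preserves essential fpc}) to show the fixed subgroup downstairs is trivial or $\Z$, and then uses the Fuchsian-group structure results (Lemma \ref{infinite elements}, Proposition \ref{roots of cyclic subgp of Fuchsian gps}) to control all of $H=p_*\fix(f_w)$ from the fact that $H^d$ lies in a cyclic group. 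This yields $\rank\fix(f_w)\le 3$, and the theorem follows from $\rank\pi_1(M)\ge 2$. Your orientation dichotomy (base-reversing versus fiber-reversing) does appear in the paper, but only inside this inessential-class analysis, where the inessentiality is what forces the rank down to $\le 1$ on the base; without the essential/inessential split and the metacyclic conclusion, the extension bookkeeping $\rank H\le\rank(H\cap C)+\rank p_*(H)$ cannot close the argument.
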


\begin{rem}
By the well known Geometrization Theorem, $M$ is a Seifert manifold with hyperbolic orbifold $B_M$ if and only if $M$ admits a geometric structure based on one of the two geometries : $\mathbb H^2\times \mathbb R$, $\widetilde{SL(2,\mathbb R)}$. The condition that $f$ is orientation-reversing is necessary. If $f$ is orientation-preserving, then the fixed subgroup $\fix(f_{\pi})$ can be infinitely generated, see \cite[Example 5.2]{Z}.
\end{rem}

\begin{rem}
By \cite[Example 5.1]{Z}, one can show that there is an orientation-reversing homeomorphism $f$ of $M_n=S_n\times S^1$, where $S_n$ is a closed orientable surface of genus $n\geq 2$, such that $\rank\fix(f_{\pi})=4n-2$.
Therefore, for any $\varepsilon>0$, there exists a Seifert manifold $M_n$ and an orientation-reversing homeomorphism $f$ of $M_n$, such that
$$\frac{\rank\fix(f_{\pi})}{\rank\pi_1(M_n)}=\frac{4n-2}{2n+1}>2-\varepsilon.$$
\end{rem}

Theorem \ref{main thm} is inspired by the following proposition \cite[Corollary 1.4]{Z}.

\begin{prop}\label{rank of essential fpc}
Suppose $f: M\rightarrow M$ is a homeomorphism of a compact connected orientable Seifert manifold with hyperbolic orbifold $B_M$. Let $f_*: \pi_1(M,x)\rightarrow \pi_1(M,x)$ be the induced automorphism, where $x$ is a fixed point contained in an essential fixed point class of $f$. Then
$$\rank\fix(f_*)<2\rank \pi_1(M).$$
\end{prop}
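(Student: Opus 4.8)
The plan is to exploit the Seifert fibration to reduce the three–dimensional problem to the base orbifold, while using Nielsen fixed point theory to pin down the behavior along the fiber. Since $B_M$ is hyperbolic, the Seifert fibration of $M$ is unique up to isotopy, so $f$ is isotopic to a fiber–preserving homeomorphism; as the index and the (conjugacy class of the) fixed subgroup attached to an essential fixed point class are isotopy invariants, I may assume from the outset that $f$ is fiber–preserving, inducing $f'\colon B_M\to B_M$. Writing $h$ for a regular fiber (and assuming $B_M$ orientable, passing to its orientation double cover if necessary so that $h$ is central), one has the central extension
\[
1\longrightarrow \langle h\rangle \longrightarrow \pi_1(M)\stackrel{p}{\longrightarrow}\pi_1^{\mathrm{orb}}(B_M)\longrightarrow 1,
\]
with $\langle h\rangle\cong\Z$ the center of $\pi_1(M)$. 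Since this subgroup is characteristic, $f_*$ restricts to $\langle h\rangle$ as $h\mapsto h^{\pm1}$ and descends to an automorphism $f'_*$ of $\pi_1^{\mathrm{orb}}(B_M)$, giving a commutative ladder of the two extensions.

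The decisive step — and the main obstacle — is to show that the essentiality of the fixed point class of $x$ forces $f_*(h)=h^{-1}$, i.e. that $f$ reverses the fiber orientation. The point $x$ lies over a fixed point $\bar x$ of $f'$, and $f$ restricts on the invariant circle fiber $F_{\bar x}$ to a homeomorphism of degree $\pm1$, the sign being exactly the exponent in $f_*(h)=h^{\pm1}$. A degree $+1$ self–map of $S^1$ is homotopic to a rotation, whose Lefschetz (hence Nielsen) number is $0$; by the fixed point index product formula for fiber–preserving maps, the index of the class of $x$ is the product of the index of the class of $\bar x$ for $f'$ with the fiber contribution, so a $+1$ fiber makes this index vanish. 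Thus an essential class can sit only over a fiber on which $f$ has degree $-1$, giving $f_*(h)=h^{-1}$, and the same formula shows the projected class of $\bar x$ is itself essential. Making this index formula rigorous for a Seifert fibration, exceptional fibers included, is the technical heart of the argument, and it is precisely where the hypothesis is used: if instead $f_*(h)=h$ then $\langle h\rangle\subseteq\fix(f_*)$, which may then be infinitely generated, matching the orientation–preserving counterexample in the remarks.

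Granting $f_*(h)=h^{-1}$, I analyze $\fix(f_*)$ through the extension. Because $h^n=h^{-n}$ only for $n=0$, one has $\fix(f_*)\cap\langle h\rangle=1$, so $p$ embeds $\fix(f_*)$ as a subgroup $H:=p(\fix(f_*))$ of $K:=\fix(f'_*)\le\pi_1^{\mathrm{orb}}(B_M)$. For $\bar g\in K$ choose a lift $g$; then $f_*(g)=g\,h^{k(\bar g)}$ for some $k(\bar g)\in\Z$, and since $f_*(gh^m)=gh^{k(\bar g)-2m}$, the lift can be corrected to a fixed one precisely when $k(\bar g)$ is even. Centrality of $h$ makes $\bar g\mapsto k(\bar g)\bmod 2$ a well–defined homomorphism $\varepsilon\colon K\to\Z/2$ with $H=\ker\varepsilon$, so $H$ has index $1$ or $2$ in $K$.

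It remains to bound ranks. The class of $\bar x$ being essential, the orbifold analogue of Theorem \ref{single endomorphism on surface gp} — obtained by applying the surface bound to a finite–sheeted surface covering of the hyperbolic orbifold $B_M$ — gives $\rank K=\rank\fix(f'_*)\le\rank\pi_1^{\mathrm{orb}}(B_M)$, while the surjection $p$ yields $\rank\pi_1^{\mathrm{orb}}(B_M)\le\rank\pi_1(M)$. Finally, as $[K:H]\le2$, the Reidemeister–Schreier inequality gives $\rank H\le 2\rank K-1$ (the case $K=1$ being trivial), whence
\[
\rank\fix(f_*)=\rank H\le 2\rank K-1\le 2\rank\pi_1(M)-1<2\rank\pi_1(M).
\]
The factor $2$ thus enters exactly through the index–two subgroup forced by the reversed fiber, which is also why the bound is essentially sharp, as the examples built on $S_n\times S^1$ show.
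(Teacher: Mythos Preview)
The paper does not prove this proposition at all: it is quoted verbatim as \cite[Corollary~1.4]{Z} and used as a black box in the proof of Theorem~\ref{main thm}. So there is no ``paper's own proof'' to compare against; the substantive work in \cite{Z} is an inequality of the type $\sum_{\F}\bigl(|\ind(f,\F)|+\rank(f,\F)-1\bigr)$ bounded by a fixed multiple of $|\chi(B_M)|$, proved by putting the induced orbifold map into a standard form and analysing the fixed-point classes geometrically.

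Your outline has the right architecture (reduce to a fiber-preserving map, show an essential class forces $f_*(h)=h^{-1}$, then project to the base), and you correctly flag the index product formula over singular fibers as a technical debt. But there is a genuine gap at the step you treat as routine: the bound $\rank\fix(f'_*)\le\rank\pi_1^{\mathrm{orb}}(B_M)$. Your justification, ``apply the surface bound to a finite-sheeted surface covering of $B_M$'', does not work. If $q\colon S\to B_M$ is a degree-$d$ surface cover and $\tilde f'$ a lift of $f'$, then Theorem~\ref{single endomorphism on surface gp} gives $\rank\fix(\tilde f'_*)\le\rank\pi_1(S)$, but $\rank\pi_1(S)$ grows linearly in $d$ (by Riemann--Hurwitz, $\chi(S)=d\,\chi(B_M)$), while $\rank\pi_1^{\mathrm{orb}}(B_M)$ is fixed. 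On the other side, $\fix(\tilde f'_*)$ sits inside $\fix(f'_*)$ with index at most $d$, so Reidemeister--Schreier only yields $\rank\fix(f'_*)\ge(\rank\fix(\tilde f'_*)-1)/d+1$; there is no useful upper bound on $\rank\fix(f'_*)$ coming from the cover. In short, both sides of the surface inequality scale with $d$, and nothing descends. An orbifold analogue of the Jiang--Wang--Zhang bound is exactly what has to be proved directly, and that is the content of \cite{Z}; it cannot be bootstrapped from the surface case in the way you suggest.

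A secondary issue: your reduction ``passing to the orientation double cover of $B_M$ if necessary so that $h$ is central'' changes $M$, $f$, and $\pi_1(M)$ simultaneously, and you would then need to transfer the rank bound back down, which runs into the same index-versus-rank difficulty as above.
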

% -------------------------------------------------------------------------------------------------------------------------------------------------------------
The paper is organized as follows. In Section 2, we will give some background on fixed points and fixed subgroups of a selfmap. In Section 3, we will give some useful facts on Fuchsian groups. In Section 4, we will consider the special case of Theorem \ref{main thm} that $M$ is an orientable circle bundle
over an orientable hyperbolic surface. Finally, we will finish the proof of Theorem \ref{main thm} in Section 5, and give some examples and questions in Section 6.\\

\noindent\textbf{Acknowledgements.} The author would like to thank Professor Boju Jiang for valuable communications. This work was carried out while
the author was visiting Princeton University and he would like to thank for their hospitality. The author is partially supported by NSFC (No. 11201364) and ``the Fundamental Research Funds for the Central Universities".\\

%%----------------------------------------------------------------------------------------------------------------------------------------------------------------
\section{Fixed points and fixed subgroups}

Let $X$ be a connected compact polyhedron and $f:X\rightarrow X$ be a selfmap. In this section, we introduce some facts on fixed point classes and fixed subgroups of $f$.

The fixed point set
$$\fix f:=\{x\in X|f(x)=x\}$$
splits into a disjoint union of $fixed~ point~ classes$: two fixed points are in the same class if and only if they can be joined by a $Nielsen~ path$, which is a path homotopic (rel. endpoints) to its own $f$-image. For each fixed point class $\F$, a homotopy invariant $index$ $\ind(f,\F)\in \Z$ is defined. A fixed point class is $essential$ if its index is non-zero, otherwise, called $inessential$ (see \cite{fp1} for an introduction).

Although there are several approaches to define fixed point classes, we state the one using paths and introduce another homotopy invariant $rank$ $\rank(f,\F)\in \Z$ for each fixed point class $\F$ (see \cite[\S2]{JWZ}), which plays a key role in this paper.

\begin{defn}
By an $f$-$route$ we mean a homotopy class (rel. endpoints) of path $w:I\rightarrow X$ from a point $x\in X$ to $f(x)$. For brevity we shall often say the path $w$ (in place of the path class $[ w ]$) is an $f$-route at $x=w(0)$. An $f$-route $w$ gives rise to an endomorphism
$$f_{w}:\pi_1(X,x) \rightarrow\pi_1(X,x),~~[ a] \mapsto [ w(f\comp a)\overline w ] $$
where $a$ is any loop based at $x$, and $\overline w$ denotes the reverse of $w$. For brevity, we will write
$$f_{\pi}: \pi_1(X)\to\pi_1(X)$$
when $w$ and the base point $x$ are omitted.

Two $f$-routes $[ w ]$ and $[ w' ]$ are $conjugate$ if there is a path $q:I\rightarrow X$ from $x=w(0)$ to $x'=w'(0)$ such that $[ w' ]=[ \overline qw (f\comp q)]$, that is $w'$ and $\overline qw (f\comp q)$ homotopic rel. endpoints. We also say that the (possibly tightened) $f$-route $\overline qw (f\comp q)$ is obtained from $w$ by an $f$-$route$ $move$ along the path $q$.
\end{defn}

Note that a constant $f$-route $w$ corresponds to a fixed point $x=w(0)=w(1)$ of $f$, and the endomorphism $f_{w}$ becomes the usual
$$f_*:\pi_1(X,x)\rightarrow \pi_1(X,x),~~[ a]\mapsto  [ f\comp a],$$
where $a$ is any loop based at $x$. Two constant $f$-routes are conjugate if and only if the corresponding fixed points can be joint by a Nielsen path. This gives the following definition.

\begin{defn}
With an $f$-route $w$ (more precisely, with its conjugacy class) we associate a $fixed$ $point$ $class$ $\F_{w}$ of $f$, which consists of the fixed points that correspond to constant $f$-routes conjugate to $w$. Thus fixed point classes are associated bijectively with conjugacy classes of $f$-routes. A fixed point class $\F_{w}$ can be empty if there is no constant $f$-route conjugate to $w$. Empty fixed point classes are inessential and distinguished by their associated route conjugacy classes.
\end{defn}

\begin{defn}
The $fixed$ $subgroup$ of the endomorphism $f_w$ is the subgroup
$$\fix(f_w):=\{\gamma\in \pi_1(X,w(0))|f_w(\gamma)=\gamma\}.$$
The $stabilizer$ of the fixed point class $\F_w$ is defined to be
$$\stab(f,\F_w):=\fix(f_w),$$
it is well defined up to isomorphism because conjugate $f$-routes have isomorphic stabilizers. Hence, we have the $rank$ of $\F_w$ defined as
$$\rank(f,\F_w):=\rank \stab(f,\F_w)=\rank\fix(f_w).$$
\end{defn}

The following are some facts on stabilizer (see \cite[\S2]{JWZ}).

\textbf{Fact} (Homotopy invariance). A homotopy $H=\{h_t\}_{t\in I}:X\rightarrow X$ gives rise to a bijective correspondence $H: \F_{w_0}\mapsto \F_{w_1}$ from $h_0$-fixed point classes to $h_1$-fixed point classes, and
$$\ind(h_0,\F_{w_0})=\ind(h_1, \F_{w_1}), ~~\stab(h_0,\F_{w_0})\cong \stab(h_1, \F_{w_1}),$$
which indicates that the index $\ind(f,\F_w)$ and the rank $\rank(f,\F_w)$ of a fixed point class are both homotopy invariants.

\textbf{Fact} (Morphism). A morphism from a selfmap $f: X\rightarrow X$ to a selfmap $g:Y\rightarrow Y$ means a map $h:X\rightarrow Y$ such that $h\comp f=g\comp h$. It induces a natural function $w\mapsto h\comp w $ from $f$-routes to $g$-routes and a function $\F_{w}\mapsto \F_{h\comp w}$  from $f$-fixed point classes to $g$-fixed point classes, such that
$$h(\F_w)\subseteq \F_{h\comp w},~~~h_*\stab(f,\F_{w})\leq\stab(g,\F_{h\comp w}).$$

The following are useful lemmas on covering spaces and fixed point classes.

\begin{lem}\label{fixed point class lifting}
Let $p:\widetilde M\rightarrow M$ be a finite covering of a compact manifold $M$, and $f:M\rightarrow M$ be a homeomorphism. Suppose $\tilde f:\widetilde M\rightarrow \widetilde M$ is a lifting of $f$, and the $\tilde f$-route $\tilde w$ is a lifting of the $f$-route $w$. Then the $f$-fixed point class $\F_w$ associated to $w$ is essential if and only if the $\tilde f$-fixed point class $\F_{\tilde w}$ associated to $\tilde w$ is essential, moreover,
$$\ind(\tilde f,\F_{\tilde w})=n\times \ind(f,\F_w)$$
where $n$ is a positive integer.
\end{lem}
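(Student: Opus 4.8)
The plan is to reduce to the case where $\fix f$, hence $\fix\tilde f$, is finite, and then to analyse the restriction of $p$ to the fixed point class $\F_{\tilde w}$. First I would invoke homotopy invariance: choose a homotopy $H=\{h_t\}$ from $h_0=f$ to a map $g=h_1$ with $\fix g$ finite (general position), lift it through the covering $p$ to a homotopy $\widetilde H=\{\tilde h_t\}$ with $\tilde h_0=\tilde f$ (homotopy lifting property of $p$), and note that $p\circ\tilde h_t=h_t\circ p$ makes $p$ a morphism from $\widetilde H$ to $H$. The homotopy-invariance Fact, applied to both $H$ and $\widetilde H$ and tracking the routes $w\leadsto w_1$, $\tilde w\leadsto\tilde w_1$ (compatibly, since $p$ is a morphism), then lets me replace $(f,\tilde f,w,\tilde w)$ by $(g,\tilde g,w_1,\tilde w_1)$. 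So from now on $\F_w$ is a finite set of isolated fixed points and $\F_{\tilde w}\subseteq p^{-1}(\F_w)$ is finite.

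Next I would show that $p$ maps $\F_{\tilde w}$ \emph{onto} $\F_w$. The inclusion $p(\F_{\tilde w})\subseteq\F_w$ is the morphism Fact. For surjectivity, given $x\in\F_w$ I pick a path $q$ from $x_0=w(0)$ to $x$ realizing the route conjugacy, so that $w\simeq q\,\overline{f\circ q}$ rel.\ endpoints, lift $q$ to $\tilde q$ starting at $\tilde x_0=\tilde w(0)$, and set $\tilde x=\tilde q(1)$, a lift of $x$. Using that a lift of a path is determined by its endpoint, I compare the lift of $\overline{f\circ q}$ coming from the decomposition of $\tilde w$ (which starts at $\tilde x$) with the lift $\overline{\tilde f\circ\tilde q}$ (which starts at $\tilde f(\tilde x)$): both end at $\tilde w(1)=\tilde f(\tilde x_0)$, so they coincide and $\tilde f(\tilde x)=\tilde x$. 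The same comparison gives $\tilde w\simeq\tilde q\,\overline{\tilde f\circ\tilde q}$, so the constant $\tilde f$-route at $\tilde x$ is conjugate to $\tilde w$, i.e.\ $\tilde x\in\F_{\tilde w}$ and $p(\tilde x)=x$. In particular $\F_w=\emptyset$ if and only if $\F_{\tilde w}=\emptyset$.

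It then remains to check two things. First, the fibre cardinality of $p|_{\F_{\tilde w}}$ is uniform: for $x,x'\in\F_w$ there is a Nielsen path $q$ of $f$ from $x$ to $x'$, and lifting $q$ (resp.\ $\overline q$) carries $p^{-1}(x)\cap\F_{\tilde w}$ bijectively to $p^{-1}(x')\cap\F_{\tilde w}$ by the argument just used; hence $\#(p^{-1}(x)\cap\F_{\tilde w})=n$ is independent of $x$, and $n\ge 1$ by the surjectivity above. Second, local indices are preserved: since $p$ is a local homeomorphism with $p\circ\tilde f=f\circ p$, near any $\tilde x\in\fix\tilde f$ it is a chart conjugating $\tilde f$ to $f$ near $p(\tilde x)$, so $\ind(\tilde f,\tilde x)=\ind(f,p(\tilde x))$. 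Combining these with additivity of the index over the finite set $\F_{\tilde w}$,
\[\ind(\tilde f,\F_{\tilde w})=\sum_{\tilde x\in\F_{\tilde w}}\ind(\tilde f,\tilde x)=n\sum_{x\in\F_w}\ind(f,x)=n\,\ind(f,\F_w),\]
with $n$ a positive integer; in particular one index vanishes iff the other does, which is the essentiality statement.

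I expect the main obstacle to be the second step, the surjectivity $p(\F_{\tilde w})=\F_w$ together with the bookkeeping around possibly-empty classes: everything has to be phrased via $f$-routes and their conjugacy rather than via fixed points, and one must verify that the homotopy reduction of the first step is genuinely compatible with the covering, i.e.\ that the lifted homotopy tracks the lifted route. Once that is in place, the uniform multiplicity and the index computation are routine.
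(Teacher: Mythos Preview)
Your proposal is correct and follows essentially the same approach as the paper: reduce by homotopy to isolated fixed points, prove $p(\F_{\tilde w})=\F_w$ by lifting the conjugating path, show the fibre cardinality over $\F_w$ is a constant $n\ge 1$ by lifting Nielsen paths, and conclude via local index equality. The paper's version of the surjectivity step lifts the contractible loop $\overline c\,w\,(f\circ c)$ directly rather than comparing two lifts of $\overline{f\circ q}$ via their common endpoint, but this is only a cosmetic difference.
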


\begin{proof}
Since indices of fixed point classes are homotopy invariants of the map $f$, via perturbation, we may assume that all fixed points of $f$ are isolated.

Firstly, we claim that $\F_{\tilde w}$ is nonempty if and only if $\F_w$ is nonempty, and $p(\F_{\tilde w})=\F_w$.

If $\F_{\tilde w}$ is nonempty, then for any point $\tilde x\in \F_{\tilde w}\subseteq\fix\tilde f$, the $\tilde f$-route $\tilde w$ is conjugate to $\tilde x$, namely, there is a path $\gamma: I\rightarrow\widetilde M$ from $\tilde w(0)$ to $\tilde x$ such that $\overline\gamma\tilde w(\tilde f\comp\gamma)\simeq \tilde x$ rel. $\tilde x$. Hence
$$(\overline {p\comp\gamma})w(f(p\comp\gamma))=p(\overline\gamma\tilde w(\tilde f\comp\gamma))\simeq p(\tilde x).$$
Namely, the $f$-route $w$ is conjugate to the point $p(\tilde x)\in\F_w\subseteq\fix f$. So $p(\F_{\tilde w})\subseteq\F_w$.

If $\F_w$ is nonempty, then for any point $x\in\F_w\subseteq\fix f$, the $f$-route $w$ is conjugate to $x$, namely, there is a path $c$ from $w(0)$ to $x$ such that $\overline cw(f\comp c)\simeq x$ rel. $x$. Let $\tilde c$ be a lifting of $c$ from $\tilde w(0)$ to a point $\tilde x\in p^{-1}(x)$. Then $\overline{\tilde c}\tilde w(\tilde f\comp\tilde c)$ is a lifting of the contractible loop $\overline cw(f\comp c)$. Hence $\overline{\tilde c}\tilde w(\tilde f\comp\tilde c)$ is also a contractible loop and $\tilde f(\tilde x)=\tilde x$, namely, the $\tilde f$-route $\tilde w$ is conjugate to the point $\tilde x\in\F_{\tilde w}\subseteq\fix \tilde f$. So $\F_w\subseteq p(\F_{\tilde w})$ and the claim holds.

Secondly, we prove $\ind(\tilde f,\F_{\tilde w})=n\times \ind(f,\F_w).$

If $\F_w$ is empty, then $\F_{\tilde w}$ is also empty according to the claim above, and the equation holds clearly.

Now we consider the case that $\F_w$ is nonempty. Pick a point $x\in \F_w$. We can assume
$$p^{-1}(x)\cap\F_{\tilde w}=\{\tilde x_1,\ldots,\tilde x_n\}$$
where $n>0$ since the covering $p:\widetilde M\rightarrow M$ is finite and $p(\F_{\tilde w})=\F_w$. If $y\neq x$ and $y\in \F_w$, then there is a Nielsen path $c$ from $x$ to $y$ such that $c\simeq f\comp c$ rel. $\{x,y\}$ by the definition of fixed point class. Hence there are $n$ liftings $\tilde c_i$ with $\tilde c_i(0)=\tilde x_i$ and $\tilde f\comp \tilde c_i\simeq \tilde c_i$ rel. endpoints, $i=1,\ldots,n$. Therefor
$$\{\tilde c_1(1),\ldots,\tilde c_n(1)\}=p^{-1}(y)\cap\F_{\tilde w}.$$
Then
$$\#\F_{\tilde w}=n\#\F_w.$$
So the equality holds by the fact $\ind(\tilde f,\tilde x)=\ind(f, x)$.
\end{proof}

A covering $p:\widetilde M\rightarrow M$ between compact manifolds is called $characteristic$ if the subgroup $p_*\pi_1(\widetilde M)$ is a characteristic subgroup of $\pi_1(M)$, i.e., it is invariant under any automorphism of $\pi_1(M)$. Recall that if $G$ is a finite
index subgroup of a finitely generated group $H$, then there is a
finite index characteristic subgroup $G'$ of $H$, such that
$G'\subseteq G$. It follows that given any finite covering
$p:\widetilde M\rightarrow M$ of a compact manifold $M$, there is a
finite covering $q:\widehat M\rightarrow \widetilde M$ so that
$p\comp q:\widehat M\rightarrow M$ is a characteristic covering. In this case for
any homeomorphism $f: M\rightarrow M$ with $y=f(x)$, and
any points $\tilde x$ and $\tilde y$ covering $x$ and $y$
respectively, there is a lifting $\tilde f$ of $f$ such that $\tilde
f(\tilde x)=\tilde y$. So we have the following

\begin{lem}\label{lifting of route}
Let $p:\widetilde M\rightarrow M$ be a characteristic covering between compact manifolds, and $\tilde w$ be a lifting of an $f$-route $w$ of a homeomorphism $f:M\rightarrow M$. Then there is a lifting $\tilde f$ of $f$ such that $\tilde w$ is an $\tilde f$-route.
\end{lem}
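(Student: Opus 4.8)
The plan is to read off the claim from the lifting criterion for covering spaces. Write $x=w(0)$ and $y=f(x)=w(1)$, so the $f$-route $w$ is a path in $M$ from $x$ to $y$, and let $\tilde x=\tilde w(0)$, $\tilde y=\tilde w(1)$ be the endpoints of the given lifted path $\tilde w$, so that $p(\tilde x)=x$ and $p(\tilde y)=y$. By the very definition of an $f$-route, in order to conclude that $\tilde w$ is an $\tilde f$-route for some lift $\tilde f$ of $f$ it suffices to produce such a lift with $\tilde f(\tilde x)=\tilde y$: then $\tilde w$ is a path from $\tilde x$ to $\tilde f(\tilde x)$, which is exactly an $\tilde f$-route based at $\tilde x$. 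So the whole problem is to choose the lift $\tilde f$ correctly.

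First I would recall that a lift $\tilde f\colon\widetilde M\to\widetilde M$ of $f\comp p\colon\widetilde M\to M$ with prescribed value $\tilde f(\tilde x)=\tilde y$ exists if and only if $(f\comp p)_*\pi_1(\widetilde M,\tilde x)\subseteq p_*\pi_1(\widetilde M,\tilde y)$ as subgroups of $\pi_1(M,y)$. Then I would invoke the hypothesis that $p$ is characteristic: put $N=p_*\pi_1(\widetilde M)$, which is a characteristic, hence normal, subgroup of $\pi_1(M)$, and in particular does not depend on which point over $x$ (resp. over $y$) is used as basepoint. Since $f$ is a homeomorphism, $f_*$ followed by a change-of-basepoint isomorphism is an automorphism of $\pi_1(M)$, and a characteristic subgroup is invariant under every automorphism; therefore $f_*(N)=N$ (in the relevant basepoints). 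Consequently $(f\comp p)_*\pi_1(\widetilde M,\tilde x)=f_*N=N=p_*\pi_1(\widetilde M,\tilde y)$, the criterion is satisfied with equality, and the desired lift $\tilde f$ exists. Finally, applying the same construction to $f^{-1}$ produces a two-sided inverse for $\tilde f$, so $\tilde f$ is itself a homeomorphism, i.e. a genuine lift of the homeomorphism $f$ in the sense used in Lemma \ref{fixed point class lifting}.

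I do not expect any real obstacle: the one point needing a little care is the bookkeeping of basepoints, namely that "the" subgroup $p_*\pi_1(\widetilde M,\cdot)$ is independent of the basepoint over $x$ (which is what normality of a characteristic subgroup gives) so that the identity $f_*(N)=N$ is meaningful and the lifting criterion holds with equality. This is precisely the argument already sketched in the paragraph preceding the statement; the lemma merely repackages it in the language of $f$-routes so that it can be cited directly in the later sections.
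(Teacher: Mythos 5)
Your proof is correct and follows essentially the same route as the paper: the paper does not give a formal proof of this lemma but derives it from exactly the observation you make, namely that for a characteristic (hence normal and automorphism-invariant) covering subgroup the lifting criterion for $f\comp p$ is satisfied with any prescribed value $\tilde f(\tilde x)=\tilde y$, so one chooses $\tilde x=\tilde w(0)$ and $\tilde y=\tilde w(1)$. Your added care about basepoint independence and the invertibility of $\tilde f$ just makes explicit what the paper leaves implicit.
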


%-----------------------------------------------------------------------------------------------------------------------------------------------------------------
\section{Some facts on Fuchsian groups}

In this section, we give some facts on Fuchsian groups.

Let $B$ be an orbifold. Recall that an orbifold in this paper means a compact 2-orbifold with singular points consisting of cone points, then we can assume $B=F(n_1,n_2,\ldots, n_k)$, where the compact surface $F$ denotes the underlying space of $B$ and $n_i\geq 2$ denotes the cone point with cone angle $2\pi/n_i$, $i=1,2,\ldots,k$.  The fundament group $\pi_1(B)$ is called a $Fuchsian$ $group$. If $H$ is a subgroup of the Fuchsian group $\pi_1(B)$, then $H$ is also a Fuchsian group because there is a covering orbifold $\widetilde B$ of $B$ such that $H\cong \pi_1(\widetilde B)$. Moreover, if $\widetilde B$ is also compact, then $\chi(\widetilde B)=k\chi(B)$ and $H$ has finite index $k$ in $\pi_1(B)$, where $k>0$ is the degree of the covering. In particular, if the orbifold $B$ is hyperbolic, then $B$ can be covered by a hyperbolic surface $S$, and $\pi_1(B)$ is an infinite Fuchsian group since $\pi_1(B)$ has a subgroup isomorphic to the infinite group $\pi_1(S)$. For more information on Fuchsian groups, see \cite[Chapter 2]{JS}.

\begin{lem} \label{center of Fuchsian gp}
{\rm(1)} Any infinite Fuchsian group with nontrivial center is either free abelian of rank $\leq 2$ or isomorphic to the fundamental group of a Klein bottle.

{\rm(2)} Any finite Fuchsian group is either cyclic or isomorphic to
$$\langle a,b|a^{n_1}=b^{n_2}=(ab)^{n_3}=1\rangle,~~~\frac{1}{n_1}+\frac{1}{n_2}+\frac{1}{n_3}>1, n_1,n_2,n_3\geq 2,$$
which is the fundamental group of the closed orbifold $O=S^2(n_1,n_2,n_3)$ with $\chi(O)=\frac{1}{n_1}+\frac{1}{n_2}+\frac{1}{n_3}-1>0$.
\end{lem}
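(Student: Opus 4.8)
The plan is to realize the Fuchsian group as $\Gamma=\pi_1(B)$ with $B=F(n_1,\dots,n_k)$ and to argue according to whether the underlying surface $F$ has boundary. If $\partial F\neq\varnothing$, I would take the standard presentation of $\pi_1(B)$ and use the long defining relation to eliminate one boundary generator by a Tietze move; this exhibits $\Gamma$ as a free product $F_r*\Z/n_1*\cdots*\Z/n_k$ of a free group of some rank $r\ge 0$ with finite cyclic groups, and in particular $\Gamma$ is virtually free. A nontrivial free product with at least two nontrivial factors, or with a free factor of positive rank, has trivial centre (the one borderline case, $\Z/2*\Z/2$, is infinite dihedral and also has trivial centre); so if $\Gamma$ has nontrivial centre it must be a single cyclic factor, namely infinite cyclic (free abelian of rank $1$) in case~(1) or finite cyclic in case~(2). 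Thus the boundary case is quick for both parts.

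Now suppose $B$ is closed and recall $\chi(B)=\chi(F)-\sum_i(1-1/n_i)$, where each $1-1/n_i\ge\frac12$ since $n_i\ge 2$. For case~(2): a closed $2$-orbifold with $\chi(B)\le 0$ is Euclidean or hyperbolic and hence has infinite orbifold fundamental group, so a finite Fuchsian group must come from a closed $B$ with $\chi(B)>0$. The inequality $\chi(B)>0$ then forces $F=S^2$ with $k\le 3$ or $F=\mathbb{RP}^2$ with $k\le1$, and in each of these finitely many shapes I would simply read $\Gamma$ off its presentation: for $S^2$ with $k\le 2$ one gets the trivial group or $\Z/\gcd(n_1,n_2)$; for $\mathbb{RP}^2$ with $k\le 1$ one gets $\Z/2$ or $\Z/2n_1$; and for $S^2(n_1,n_2,n_3)$, eliminating the generator $x_3=(x_1x_2)^{-1}$ yields exactly $\langle a,b\mid a^{n_1}=b^{n_2}=(ab)^{n_3}=1\rangle$, the condition $\chi(B)>0$ being precisely $1/n_1+1/n_2+1/n_3>1$. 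This is the list asserted in~(2).

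It remains, for case~(1), to treat a closed $B$ with $\chi(B)\le 0$ and $\pi_1(B)$ infinite with nontrivial centre, and to show that only $\Z^2$ and the Klein bottle group arise. If $\chi(B)<0$, then $B$ is hyperbolic, so by the fact recalled in this section $\Gamma$ has a finite-index subgroup $\Gamma_0\cong\pi_1(S)$ with $S$ a closed hyperbolic surface, which after shrinking we may take to be normal; a nontrivial central $z\in\Gamma$ then has a power in $\Gamma_0$, and that power is trivial because $Z(\pi_1(S))=1$, so $z$ is a nontrivial torsion element, i.e.\ a nontrivial elliptic isometry of $\mathbb H^2$ with a unique fixed point; centrality forces $\Gamma$ to fix that point, hence to be finite, contradicting infiniteness — so this subcase is vacuous. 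If $\chi(B)=0$, then $\Gamma$ is a $2$-dimensional crystallographic group $\Z^2\rtimes P$ with finite point group $P$ acting faithfully, a central element lies in the translation lattice $\Z^2$ and is fixed by all of $P$, and since $B$ has only cone-point singularities (no mirrors or corner reflectors) the only possibilities with nontrivial centre are $P=1$, giving $B=T^2$ and $\Gamma=\Z^2$, or $P=\Z/2$ generated by a glide reflection, giving $B$ a Klein bottle and $\Gamma$ the Klein bottle group. The main obstacle is exactly this last Euclidean bookkeeping — deciding which closed Euclidean cone-point $2$-orbifolds have nontrivial centre; the cleanest route around it is to invoke the classification of closed $2$-orbifolds, which reduces the Euclidean case to the short explicit list $\{T^2,\,K,\,S^2(2,2,2,2),\,S^2(2,4,4),\,S^2(2,3,6),\,S^2(3,3,3),\,\mathbb{RP}^2(2,2)\}$ and then only asks one to check that $T^2$ and $K$ are the two with nontrivial centre. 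A secondary technical point, already flagged above, is that the central element $z$ need not lie in the chosen surface subgroup; this is handled by passing to a normal (or, as in this section's discussion of characteristic covers, a characteristic) finite-index subgroup so that some power of $z$ does.
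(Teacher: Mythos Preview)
Your argument is correct, but it is far more elaborate than what the paper actually does: the paper simply cites \cite[Proposition II.3.11]{JS} for part~(1) and \cite[Theorem 12.2]{He} for part~(2), with no further argument. What you have written is essentially a self-contained reproof of those cited results, organized around the dichotomy $\partial F\neq\varnothing$ versus $F$ closed, and then (in the closed case) the trichotomy on the sign of $\chi(B)$. This buys independence from the external references and makes the geometric content visible---in particular your reduction of~(1) in the hyperbolic case to the centerlessness of closed hyperbolic surface groups, and of the Euclidean case to the crystallographic description $\Z^2\rtimes P$, is clean and instructive. The cost is length and the residual ``Euclidean bookkeeping'' you flag, namely the finite check over the list $\{T^2, K, S^2(2,2,2,2), S^2(2,4,4), S^2(2,3,6), S^2(3,3,3), \mathbb{RP}^2(2,2)\}$; this is routine (your observation that a central element must lie in the translation lattice and be fixed by the point group immediately rules out every case with a nontrivial rotation in $P$, and $\mathbb{RP}^2(2,2)$ is handled similarly), but it is work the paper avoids entirely by quoting Jaco--Shalen.
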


\begin{proof}
Conclusion (1) is from \cite[Proposition II.3.11]{JS}, and Conclusion (2) can be verified by \cite[Theorem 12.2]{He} clearly.
\end{proof}

For a group $G$ and an element $g\in G$, let $C_G(g)=\{x\in G|xg=gx\}$ be the centralize of $g$ in $G$, and $C(G)=\{x\in G|xg=gx, \forall g\in G\}$ the center of $G$. We have

\begin{lem}\label{centralize of an infinite-order element}
Let $B$ be a hyperbolic orbifold and $a\in \pi_1(B)$ be an element of infinite order. Then for any $i\neq 0$,
$$C_{\pi_1(B)}(a^i)=C_{\pi_1(B)}(a)\cong \Z.$$
\end{lem}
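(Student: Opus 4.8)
The plan is to first show that the larger group $C:=C_{\pi_1(B)}(a^i)$ is infinite cyclic, and then to deduce the equality with $C_{\pi_1(B)}(a)$ by an elementary argument. Since $C$ is a subgroup of the Fuchsian group $\pi_1(B)$, it is itself a Fuchsian group. It contains $a^i$, which has infinite order because $a$ does and $i\neq 0$, so $C$ is infinite; and $a^i$ lies in the center of $C$ by the very definition of the centralizer, so $C$ has nontrivial center. Hence Lemma \ref{center of Fuchsian gp}(1) applies: $C$ is free abelian of rank $\leq 2$ or the fundamental group of a Klein bottle.

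I would next rule out all of these except $\Z$ by showing that $\pi_1(B)$ has no subgroup isomorphic to $\Z\oplus\Z$. Being hyperbolic, $B$ is orbifold covered by a hyperbolic surface $S$, so $\pi_1(B)$ contains $\pi_1(S)$ as a subgroup of finite index; and $\pi_1(S)$ (free if $\partial S\neq\emptyset$, a closed hyperbolic surface group otherwise) has the property that every abelian subgroup is cyclic. If some $A\leq\pi_1(B)$ were isomorphic to $\Z\oplus\Z$, then $A\cap\pi_1(S)$ would be a finite-index, hence still rank-$2$ free abelian, subgroup of $\pi_1(S)$, a contradiction. Since the Klein bottle group contains $\Z\oplus\Z$ with index $2$, the Klein bottle case is excluded as well; and the rank-$0$ case is impossible because $C$ is infinite. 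Therefore $C\cong\Z$.

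It remains to identify $C_{\pi_1(B)}(a)$ with $C$. The inclusion $C_{\pi_1(B)}(a)\subseteq C$ is immediate, since an element commuting with $a$ commutes with $a^i$. Conversely, write $C=\langle t\rangle$; then $a\in C_{\pi_1(B)}(a)\subseteq C$ gives $a=t^k$ for some integer $k$, with $k\neq 0$ because $a$ has infinite order, and $t$ commutes with $t^k=a$, so $t\in C_{\pi_1(B)}(a)$, whence $C=\langle t\rangle\subseteq C_{\pi_1(B)}(a)$. Combining the two inclusions, $C_{\pi_1(B)}(a^i)=C=C_{\pi_1(B)}(a)\cong\Z$.

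The only step requiring any care is the claim that $\pi_1(B)$ contains no $\Z\oplus\Z$; the rest is formal. One could instead argue geometrically — an infinite-order element of a discrete subgroup of $\mathrm{Isom}(\mathbb{H}^2)$ is hyperbolic or parabolic, and its centralizer preserves the corresponding axis or parabolic fixed point, so it is virtually cyclic — but routing the argument through the finite-index surface subgroup supplied by the hyperbolicity of $B$ keeps everything within the facts already recorded above.
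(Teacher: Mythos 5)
Your proposal is correct and follows essentially the same route as the paper: apply Lemma \ref{center of Fuchsian gp}(1) to the centralizer of $a^i$, exclude the rank-$2$ abelian and Klein bottle cases using hyperbolicity, and then deduce $C_{\pi_1(B)}(a^i)=C_{\pi_1(B)}(a)$ from the fact that $a$ is a power of a generator of the infinite cyclic centralizer. The only (minor) difference is that you justify the exclusion of $\Z\oplus\Z$ and the Klein bottle group explicitly via the finite-index hyperbolic surface subgroup, where the paper simply notes that neither the torus nor the Klein bottle is a hyperbolic orbifold; your version is a bit more self-contained but not a different argument.
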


\begin{proof}
For any $i\neq 0$, $C_{\pi_1(B)}(a^i)\leq \pi_1(B)$ is an infinite Fuchsian group with nontrivial center since the infinite cyclic group $\langle a\rangle\leq C_{\pi_1(B)}(a^i)$. Then $C_{\pi_1(B)}(a^i)$ is either free abelian of rank $\leq 2$ or the fundamental group of a Klein bottle by Lemma \ref{center of Fuchsian gp}(1). Note that $B$ is hyperbolic and $C_{\pi_1(B)}(a^i)$ is a subgroup of $\pi_1(B)$, then $C_{\pi_1(B)}(a^i)$ is neither the fundamental group of a Klein bottle nor the fundamental group of a torus because neither of them is hyperbolic. Therefore, $C_{\pi_1(B)}(a^i)$ is free cyclic, set $C_{\pi_1(B)}(a^i)=\langle c\rangle\cong \Z$. Since $a\in C_{\pi_1(B)}(a^i)=\langle c \rangle$, $a$ is a power of $c$. Thus $\langle c\rangle \leq C_{\pi_1(B)}(a)$, i.e., $C_{\pi_1(B)}(a^i)\leq C_{\pi_1(B)}(a)$. Clearly,  $C_{\pi_1(B)}(a)\leq C_{\pi_1(B)}(a^i)$. Thus
$C_{\pi_1(B)}(a^i)=C_{\pi_1(B)}(a)\cong \Z$ for any $i\neq 0$.
\end{proof}

Now we give two lemmas which are used in the following.

In group theory, a group $G$ is called $metacyclic$ if it contains a cyclic, normal subgroup $N$ such that the quotient group $G/N$ is also cyclic. Clearly, the rank of a metacyclic group is no more than $2$. In particular, cyclic groups are metacyclic. For metacyclic groups, we have

\begin{lem}\label{subgp of metacyclic gp}
{\rm(1)} Any subgroup of a metacyclic group is also metacyclic.

{\rm(2)} Let $G$ be a group. If $G$ has a cyclic normal subgroup $T$ such that the quotient group $G/T$ is metacyclic, then any subgroup of $G$ has rank $\leq 3$.
\end{lem}

\begin{proof}
(1) Let $G$ be a metacyclic group, i.e., there is a cyclic normal subgroup $N$ such that $G/N$ is also cyclic. Let $H$ be any subgroup of $G$. Then $H\cap N$ is a cyclic normal subgroup of $H$, and $H/H\cap N\cong HN/N$ is also cyclic since $HN/N\leq G/N$. Thus $H$ is metacyclic.

(2) Let $H$ be any subgroup of $G$. Note that $H\cap T$ is a cyclic normal subgroup of $H$ because $T$ is cyclic normal in $G$. Then the quotient group $H/H\cap T\cong HT/T\leq G/T$ is metacyclic and $\rank (H/H\cap T)\leq 2$ according to conclusion (1). Thus $\rank H\leq 3$.
\end{proof}

\begin{lem}\label{infinite elements}
Suppose $G$ is a group and $c\in G$ is an element of infinite order. If all the infinite-order elements are contained in the infinite cyclic group $\langle c\rangle$, then $G$ is a metacyclic group. More precisely, either $G=\langle c\rangle\cong \Z$ or $G=\langle a,c|a^2=1, aca^{-1}=c^{-1}\rangle\cong \Z_2*\Z_2$.
\end{lem}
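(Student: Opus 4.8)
The plan is to first show that $\langle c\rangle$ is normal in $G$, then to identify the centralizer $C_G(c)$ with $\langle c\rangle$, and finally to read off the two possible isomorphism types of $G$ from the index $[G:\langle c\rangle]$.

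First I would note that for every $g\in G$ the element $gcg^{-1}$ has infinite order, so by hypothesis $gcg^{-1}\in\langle c\rangle$; hence $\langle c\rangle\trianglelefteq G$. Since $\langle c\rangle\cong\Z$ and $\mathrm{Aut}(\Z)=\{\pm 1\}$, conjugation by $g$ sends $c$ to $c^{\epsilon(g)}$ with $\epsilon(g)\in\{\pm 1\}$, and a short computation shows that $\epsilon:G\to\{\pm 1\}$ is a homomorphism whose kernel is the centralizer $C_G(c)$; thus $[G:C_G(c)]\le 2$.

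The key step is to prove $C_G(c)=\langle c\rangle$. Suppose not, and pick $g\in C_G(c)\setminus\langle c\rangle$; by hypothesis $g$ has finite order. As $g$ commutes with $c$ we have $(gc)^m=g^mc^m$ for all $m\in\Z$, and if $(gc)^m=1$ then $g^m=c^{-m}$, which is impossible for $m\ne 0$ since the left-hand side has finite order while the right-hand side has infinite order. Hence $gc$ has infinite order, so $gc\in\langle c\rangle$ and therefore $g=(gc)c^{-1}\in\langle c\rangle$, a contradiction. So $C_G(c)=\langle c\rangle$ and $[G:\langle c\rangle]\in\{1,2\}$.

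If $[G:\langle c\rangle]=1$ then $G=\langle c\rangle\cong\Z$. If $[G:\langle c\rangle]=2$, choose $a\in G\setminus\langle c\rangle$; then $aca^{-1}=c^{-1}$ (because $a\notin C_G(c)$) and $a^2\in\langle c\rangle$, say $a^2=c^k$. Conjugating by $a$ gives $c^k=ac^ka^{-1}=c^{-k}$, so $c^{2k}=1$ and hence $k=0$, i.e. $a^2=1$. Thus $G$ is generated by $a$ and $c$ subject only to $a^2=1$ and $aca^{-1}=c^{-1}$, so there is a surjection from $\Z_2*\Z_2=\langle a,c\mid a^2=1,\ aca^{-1}=c^{-1}\rangle$ onto $G$; it is injective because its kernel meets the infinite cyclic subgroup $\langle c\rangle$ trivially and contains no element outside $\langle c\rangle$ (the two cosets modulo $\langle c\rangle$ being respected by the map), whence $G\cong\Z_2*\Z_2$. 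In both cases $\langle c\rangle$ is a cyclic normal subgroup with cyclic quotient, so $G$ is metacyclic. I expect the only slightly delicate point to be the verification that no extra relations appear in the index-two case, i.e. that $G$ is genuinely $\Z_2*\Z_2$ and not a proper quotient of it; this is exactly what the coset argument above takes care of.
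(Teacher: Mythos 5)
Your proof is correct, and it is organized rather differently from the paper's. The paper works locally: it takes a single torsion element $a$ of order $n$, forms $H=\langle a,c\rangle$, derives $aca^{-1}=c^i$ with $i^n=1$ from normality of $\langle c\rangle$ in $H$, rules out $i=1$ and forces $n=2$ by the ``$ac$ (resp.\ $a^2c$) has infinite order, hence lies in $\langle c\rangle$'' trick, and then needs a separate absorption step showing that any further involution $b$ satisfies $b\in H$ (via the element $ab$, which commutes with $c$). You instead argue globally: $\langle c\rangle\trianglelefteq G$, conjugation gives a sign homomorphism $\epsilon:G\to\{\pm1\}$ with kernel $C_G(c)$, and the same infinite-order trick applied to $gc$ shows $C_G(c)=\langle c\rangle$, so $[G:\langle c\rangle]\le 2$ and the two isomorphism types fall out of the index. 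The core engine is identical in both arguments (a torsion element commuting with $c$ would force $gc$, of infinite order, into $\langle c\rangle$ and hence $g$ itself into $\langle c\rangle$), but your version avoids the $i^n=1$ case analysis and the second-involution step entirely, and it is also more careful than the paper at the final point: you actually verify that the surjection $\Z_2*\Z_2\to G$ has trivial kernel via the normal form $a^{\epsilon}c^k$, whereas the paper simply asserts the presentation of $H$. One small point worth making explicit: to know that conjugation by $g$ restricts to an \emph{automorphism} of $\langle c\rangle$ (so that $\mathrm{Aut}(\Z)=\{\pm1\}$ applies), you should note that $g^{-1}cg$ also lies in $\langle c\rangle$, which upgrades the inclusion $g\langle c\rangle g^{-1}\subseteq\langle c\rangle$ to an equality; this is immediate from your hypothesis but deserves a word.
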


\begin{proof}
If there are no nontrivial finite-order elements in $G$, then $G=\langle c\rangle\cong \Z$ clearly.

Now we assume that there is an element $a\in G$ of finite order $n\geq 2$. Consider the subgroup $H=\langle a, c\rangle\leq G$. Clearly $aca^{-1}$ is also an element of infinite order, thus $aca^{-1}\in \langle c\rangle$ and $\langle c\rangle$ is a normal subgroup of $H$. Hence $aca^{-1}=c^i$ for some $i\neq 0$, and $H$ is a metacyclic group. Then
$$c=a^nca^{-n}=a^{n-1}(aca^{-1})a^{1-n}=a^{n-1}c^ia^{-n+1}=c^{i^n}.$$
Note that $c$ is of infinite order, we have $i^n=1$. Thus $i=1$, if $n$ is odd; $i=\pm 1$, if $n$ is even. Namely, $H$ is either

(1) $\langle a, c|a^n=1, aca^{-1}=c\rangle$, or

(2) $\langle a, c|a^n=1, aca^{-1}=c^{-1}\rangle, n ~{\rm even}$.

However, case (1) is impossible. In fact, note that $ac\in H$ is of infinite order, thus $ac\in \langle c\rangle$. Then $a\in \langle c\rangle$ which contradicts to the assumption that $a\in G$ has finite order $n\geq 2$.

In case (2), note that $a^2ca^{-2}=c$, we have an abelian subgroup $\langle a^2, c|a^n=1, a^2ca^{-2}=c\rangle$. It implies $a^2c$ has infinite order. Thus $a^2c\in \langle c\rangle$ and $a^2\in \langle c\rangle$. Recall that $a$ has finite order $n\geq 2$ and $c$ has infinite order, then $n=2$. It implies all the nontrivial finite-order elements in $G$ must have order $2$, and
$$H=\langle a, c|a^2=1, aca^{-1}=c^{-1}\rangle\cong \Z_2*\Z_2.$$

To prove $G=H$, it suffices to prove $G\leq H$. Suppose there is another element $b\in G$ of order $2$, then there is a subgroup $H'=\langle b, c|b^2=1, bcb^{-1}=c^{-1}\rangle$ according to the argument above. Thus $abcb^{-1}a^{-1}=c$, i.e., $ab$ commutes with $c$. If $abc$ has finite order $k>0$, then $(ab)^kc^k=(abc)^k=1$, i.e., $(ab)^k=c^{-k}$ has infinite order. Thus $ab$ has infinite order and $ab\in\langle c\rangle$, it implies $b\in \langle a, c\rangle=H$. If $abc$ has infinite order, then $abc\in\langle c\rangle$ which also implies $b\in \langle a, c\rangle=H$. Therefore, $G\leq H$ and the proof is finished.
\end{proof}

\begin{prop}\label{roots of cyclic subgp of Fuchsian gps}
Let $H$ be a subgroup of $\pi_1(B)$ where $B$ is a hyperbolic orbifold. If
$$H^d:=\{h^d|h\in H\}\subseteq \langle a \rangle\cong\Z$$
where $d\in \Z_+$ is a positive integer and $a\in \pi_1(B)$ of infinite order. Then $H$ is a metacyclic group, more precisely, $H$ is either cyclic or isomorphic to $\Z_2*\Z_2$.
\end{prop}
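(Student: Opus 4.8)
The plan is to reduce the statement to Lemma~\ref{infinite elements}. Concretely, I want to show that if $H$ contains an element of infinite order, then \emph{all} infinite-order elements of $H$ lie in a single infinite cyclic subgroup of $H$; the case where $H$ is a torsion group will be handled separately and directly.

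First, assume $H$ has an element of infinite order. The key step is the claim that every infinite-order element $h\in H$ lies in $C:=C_{\pi_1(B)}(a)$. Indeed, $h^d\in\langle a\rangle$ and $h^d$ has infinite order (since $h$ does), so $h^d=a^k$ for some $k\neq 0$; as $h$ commutes with $h^d=a^k$, we get $h\in C_{\pi_1(B)}(a^k)$, and Lemma~\ref{centralize of an infinite-order element} (valid because $B$ is hyperbolic and $a$ has infinite order) gives $C_{\pi_1(B)}(a^k)=C_{\pi_1(B)}(a)=C\cong\Z$. Hence $H\cap C$ is a subgroup of $C\cong\Z$ that contains every infinite-order element of $H$ and is itself infinite, so $H\cap C=\langle c_0\rangle$ for some $c_0\in H$ of infinite order. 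Now all infinite-order elements of $H$ lie in $\langle c_0\rangle$, so Lemma~\ref{infinite elements} applied to $H$ with the element $c_0$ shows that $H$ is metacyclic and is either infinite cyclic or isomorphic to $\Z_2*\Z_2$.

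Second, assume $H$ has no element of infinite order, i.e.\ $H$ is a torsion group (this case really occurs, e.g.\ $H$ a cone-point subgroup). I would first argue that $H$ is finite: $\pi_1(B)$ is finitely generated and linear, hence virtually torsion-free by Selberg's lemma, so it has a finite-index torsion-free subgroup $N$; then $H\cap N$ is both torsion and torsion-free, hence trivial, which forces $H$ to inject into the coset set $\pi_1(B)/N$, so $|H|\leq[\pi_1(B):N]<\infty$. Finally, a finite subgroup of $\pi_1(B)$ with $B$ a hyperbolic orbifold is cyclic: $\pi_1(B)$ acts properly discontinuously on $\mathbb H^2$ by orientation-preserving isometries, a finite subgroup fixes a point of $\mathbb H^2$, and the stabilizer of a point in $\mathrm{Isom}^+(\mathbb H^2)=PSL(2,\Real)$ is isomorphic to $SO(2)$; so the finite subgroup embeds in $SO(2)$ and is cyclic. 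Alternatively one can use Lemma~\ref{center of Fuchsian gp}(2) and rule out the triangle-group cases $\langle x,y\mid x^{n_1}=y^{n_2}=(xy)^{n_3}=1\rangle$ with $1/n_1+1/n_2+1/n_3>1$, which all contain a noncyclic abelian subgroup and hence cannot embed in $PSL(2,\Real)$. In either case $H$ is cyclic, hence metacyclic.

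Combining the two cases, $H$ is metacyclic, and more precisely cyclic or isomorphic to $\Z_2*\Z_2$. The only genuinely delicate point I foresee is the torsion case: Lemma~\ref{center of Fuchsian gp}(2) on its own leaves open that a finite $H$ might be a spherical triangle group, so one must bring in the geometric input that finite subgroups of Fuchsian groups of hyperbolic type are cyclic (or rule out the triangle groups by hand). Everything else is a short deduction from Lemmas~\ref{centralize of an infinite-order element} and~\ref{infinite elements}.
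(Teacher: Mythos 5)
Your argument for the case where $H$ contains an infinite-order element is exactly the paper's: from $h^d=a^k$ with $k\neq 0$ deduce $h\in C_{\pi_1(B)}(a^k)=C_{\pi_1(B)}(a)\cong\Z$ via Lemma~\ref{centralize of an infinite-order element}, so all infinite-order elements of $H$ lie in the single cyclic subgroup $H\cap C$, and Lemma~\ref{infinite elements} finishes (you are in fact slightly more careful than the paper in noting that the generator $c_0$ of $H\cap C$ lies in $H$, as the hypothesis of that lemma requires). Where you diverge is the torsion case. The paper stays inside Fuchsian group theory: an infinite Fuchsian group has an element of infinite order (citing \cite[Lemma II.3.9]{JS}), so a torsion $H$ is finite, and then Lemma~\ref{center of Fuchsian gp}(2) plus the covering/Euler characteristic obstruction rules out the spherical triangle groups. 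You instead get finiteness from Selberg's lemma and cyclicity from the fixed-point argument in $\mathbb H^2$ with point stabilizer $SO(2)$. Both routes work; yours imports slightly heavier general machinery (linearity, Selberg) but replaces the paper's somewhat terse ``$B$ cannot be covered by $O$'' step with a cleaner geometric reason.

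Two small caveats in your torsion case. First, your assertion that $\pi_1(B)$ acts by \emph{orientation-preserving} isometries is not automatic: the underlying surface of $B$ may be non-orientable, so $\pi_1(B)$ can contain orientation-reversing isometries. What saves you is that a finite-order orientation-reversing isometry of $\mathbb H^2$ is a reflection, which would produce reflector singular locus in $B$, contradicting the standing assumption that the singular set consists only of cone points; hence every torsion element, and therefore all of your torsion group $H$, is orientation-preserving and the $SO(2)$ argument applies. Second, your ``alternative'' way of excluding the triangle groups is flawed as stated: the dihedral groups $(2,2,n)$ with $n$ odd (e.g.\ $S_3$ for $n=3$) contain no noncyclic abelian subgroup, so that criterion does not eliminate them. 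They are still excluded, but for the stabilizer reason (they are noncyclic finite groups, hence cannot fix a point in $PSL(2,\Real)$), i.e.\ by your primary argument rather than the alternative one.
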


\begin{proof}
Note that $B$ is a hyperbolic orbifold, namely, it can be covered by a hyperbolic surface $S$. Then $\pi_1(B)$ is an infinite Fuchsian group since it has a subgroup isomorphic to the infinite group $\pi_1(S)$.

If $H^d=\{1\}$, then $H$ consists of elements of finite order. Thus $H$ is a finite Fuchsian group by the fact that every infinite Fuchsian group has an element of infinite order (see \cite[Lemma II.3.9]{JS}). Therefore, $H$ is either finite cyclic or isomorphic to the fundamental group $\pi_1(O)$ of a closed orbifold $O$ with $\chi(O)>0$ by Conclusion (2) of Lemma \ref{center of Fuchsian gp}. But the latter is impossible because $B$ with $\chi(B)<0$ can not be covered by $O$. Thus $H$ is finite cyclic.

If $H^d\neq \{1\}$, then $H$ contains some elements of infinite order. For any $h\in H$ of infinite order, there is an element $a^{i_h}\neq 1$ such that $h^d=a^{i_h}$ since $H^d\subseteq \langle a \rangle$. Then $h\in C_{\pi_1(B)}(a^{i_h})$. By Lemma \ref{centralize of an infinite-order element}, $C_{\pi_1(B)}(a^{i_h})=C_{\pi_1(B)}(a)=\langle c\rangle$ for an infinite-order element $c$, thus $h\in \langle c \rangle$. Then all the infinite-order elements of $H$ are contained in the infinite cyclic group $H\cap \langle c \rangle$. Therefore, $H$ is either infinite cyclic or isomorphic to $\Z_2*\Z_2$ by Lemma \ref{infinite elements}, and the proof is finished.
\end{proof}

%-----------------------------------------------------------------------------------------------------------------------------------------------------------------
\section{Homeomorphisms of circle bundles over surfaces}

In this section, let us consider the special case that $M$ is an orientable circle bundle over an orientable hyperbolic surface $S$ with fibration $p:M\to S$.

\begin{lem}\label{fiber oriented coherently}
Suppose $M$ is a compact orientable circle bundle over a compact orientable hyperbolic surface $S$. Then all fibers can be coherently oriented so that they present the same element of infinite order in the center of $\pi_1(M)$.
\end{lem}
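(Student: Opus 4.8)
The plan is to exhibit the required central element as the homotopy class $h$ of a coherently oriented fiber of $p\colon M\to S$, and then to verify three things: that the fibers can be coherently oriented at all, that $h$ has infinite order, and that $h$ lies in the center of $\pi_1(M)$. For the first point, $M$ and $S$ are orientable, so $w_1(TM)=0$ and $w_1(TS)=0$; along the fibers the tangent bundle of $M$ splits as $TM\cong\ker(dp)\oplus p^{*}TS$, whence $w_1(\ker(dp))=w_1(TM)-p^{*}w_1(TS)=0$. Thus the vertical line bundle $\ker(dp)$ is orientable, and an orientation of it is exactly a continuous, hence coherent, choice of orientation on every fiber. (Equivalently, an oriented circle bundle has structure group $\mathrm{Homeo}^{+}(S^{1})\simeq SO(2)$, so $M$ may be regarded as a principal $S^{1}$-bundle over $S$.) Fix such a coherent orientation from now on.

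Since $S$ is a hyperbolic surface it is aspherical --- its universal cover is $\mathbb{H}^{2}$ when $S$ is closed, and contractible in general --- so $\pi_2(S)=0$. Feeding this into the homotopy exact sequence of the fibration $S^{1}\hookrightarrow M\xrightarrow{\,p\,}S$ collapses it to the short exact sequence
$$1\longrightarrow\Z=\pi_1(S^{1})\xrightarrow{\ i_{*}\ }\pi_1(M)\xrightarrow{\ p_{*}\ }\pi_1(S)\longrightarrow 1,$$
in which $i_{*}$ is injective and $i_{*}(1)=h$. In particular $\langle h\rangle=i_{*}\Z$ is an infinite cyclic normal subgroup of $\pi_1(M)$, so $h$ has infinite order; it remains to prove that $h$ is central.

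This last step is the heart of the matter. Since $\langle h\rangle$ is normal and abelian, conjugation induces a homomorphism $\theta\colon\pi_1(M)\to\mathrm{Aut}(\Z)=\{\pm1\}$ which factors through $p_{*}$, i.e., through $\pi_1(S)$. Restricting the bundle over a loop $\gamma$ in $S$ produces a circle bundle over $S^{1}$, which is a torus or a Klein bottle according to whether $\theta$ sends $[\gamma]$ to $+1$ or $-1$; in other words $\theta$ is the monodromy action of $\pi_1(S)$ on $H_1$ of the fiber, and a coherent orientation of the fibers is precisely a trivialization of this action. Hence $\theta$ is trivial and $h$ is central. (Alternatively, using the principal $S^{1}$-structure with action map $a\colon S^{1}\times M\to M$: on $\pi_1(S^{1}\times M)=\Z\times\pi_1(M)$ one has $a_{*}(1,\beta)=\beta$ and $a_{*}(\alpha,1)=h$, where $\alpha$ generates $\pi_1(S^{1})$, and since $(\alpha,1)$ commutes with $(1,\beta)$ in the product, $h$ commutes with every $\beta$.) Finally, a central element has a one-point conjugacy class, and, the bundle being coherently oriented, the oriented fiber over any point of $M$ represents the conjugacy class of $h$; so all the coherently oriented fibers present the single element $h$, which is central and of infinite order. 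I expect the main obstacle to be organizational rather than technical: one has to correctly match up ``coherently orientable fibers'', ``trivial $H_1$-monodromy of the circle bundle'', and ``$h$ central'', and to use the asphericity of $S$ to obtain both the injectivity of $i_{*}$ and the short exact sequence at the same time.
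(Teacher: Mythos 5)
Your proof is correct, but it takes a genuinely different route from the paper. The paper disposes of this lemma in one line, by citing the explicit presentation of $\pi_1(M)$ for a circle bundle over an orientable surface given in Hempel's book (Chapter 12): in that presentation the fiber class $t$ appears as a generator that visibly commutes with all the others, and the exact sequence $1\to\langle t\rangle\to\pi_1(M)\to\pi_1(S)\to 1$ coming with it shows $t$ has infinite order when $S$ is hyperbolic. You instead reconstruct everything from scratch: coherent orientability of the fibers from $w_1(\ker dp)=w_1(TM)-p^*w_1(TS)=0$; injectivity of $i_*$ and the short exact sequence from asphericity of $S$ via the homotopy exact sequence of the fibration; and centrality from the observation that the conjugation action on the normal subgroup $\langle h\rangle\cong\Z$ factors through $\pi_1(S)$ and equals the $H_1$-monodromy of the bundle, which a coherent fiber orientation trivializes (or, equivalently, from the principal $S^1$-action). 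All three steps are sound, and your closing remark correctly handles the basepoint issue (centrality makes the conjugacy class a singleton, so ``the same element'' is well defined). What the paper's approach buys is brevity, leaning on the standard structure theory of Seifert fibered spaces; what yours buys is a self-contained argument that makes explicit the equivalence between ``fibers coherently orientable,'' ``trivial monodromy on $H_1$ of the fiber,'' and ``fiber class central,'' and which would apply verbatim to circle bundles over any aspherical base without invoking a group presentation.
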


\begin{proof}
This is clear from the presentation of $\pi_1(M)$ given in \cite[Chapter 12]{He}.
\end{proof}

Let $t$ be the element of $\pi_1(M)$ represented by the fiber. We call the infinite cyclic subgroup $\langle t \rangle$ the $fiber$ of $\pi_1(M)$ associated to the fibration $p$, and $f$ $preserves$ (resp. $reveres$) $the$ $fiber$ $orientation$ if $f_{\pi}(t)=t$ (resp. $f_{\pi}(t)=-t$).\\

Let $S$ be a compact connected hyperbolic surface. A standard form of homeomorphisms on $S$ is developed in \cite{JG} with fine-tuned local behavior from the Thurston canonical map from \cite{T}.\\

\noindent{\bfseries Theorem T.}\label{Thurston
thm} {\em Let $S$ be a compact connected hyperbolic surface. Every homeomorphism $f: S\rightarrow S$ is isotopic to a
homeomorphism $\varphi$ such that either

{\rm (1)} $\varphi$ is a periodic map, i.e., $\varphi^m=id$ for some
$m\geq 1$, or equivalently, $\varphi$ is an isometry with respect to some hyperbolic metric on $S$; or

{\rm (2)} $\varphi$ is a pseudo-Anosov map, i.e., there is a number
$\lambda>1$ and a pair of transverse measured foliations
$(\mathfrak{F}^s,\mu^s)$ and $(\mathfrak{F}^u,\mu^u)$ such that
$\varphi(\mathfrak{F}^s,\mu^s)=(\mathfrak{F}^s,\frac{1}{\lambda}\mu^s)$
and $\varphi(\mathfrak{F}^u,\mu^u)=(\mathfrak{F}^u,\lambda\mu^u)$;
or

{\rm (3)} $\varphi$ is a reducible map, i.e. there is a system of
disjoint simple closed curves $\Gamma=\{\Gamma_1,\cdots,\Gamma_k\}$
in $\mathrm{int}S$ with the property below.

~~~~ {\rm (a)} $\Gamma$ is invariant by $\varphi$ (but
the $\Gamma_i$'s may be permuted) and each
component of $S\backslash \Gamma$ has negative Euler characteristic.

~~~~ {\rm (b)} $\Gamma$ has a $\varphi$-invariant tubular neighborhood $\mathcal N(\Gamma)$ such that
on each (not necessarily connected) $\varphi$-component of
$S\backslash \mathcal N(\Gamma)$, $\varphi$ satisfies $(1)$ or $(2)$.

~~~~ {\rm (c)} $\Gamma$ is minimal among all systems satisfying {\rm (a)} and {\rm (b)}.

~~~~ {\rm (d)} $\varphi$ is in the {\rm  standard form} as defined in \cite[page 79]{JG}.}
\\

The $\varphi$ above will be called the \emph{standard form} isotopic to $S$.\\

\begin{lem}\label{fixed subgp of inessential fpc on surface}
Let $f:S\rightarrow S$ be a homeomorphism of an orientable compact hyperbolic surface $S$ and $\F_w$ be a fixed point class corresponding to an $f$-route $w$.
Then

{\rm(1)} If $\F_w$ is inessential, then $\rank\fix(f_w)\leq 1$;

{\rm(2)} If $f$ is orientation-reversing, then $\rank\fix(f_w)\leq 1$.
\end{lem}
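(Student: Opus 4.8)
The goal is to bound $\rank\fix(f_w)$ for a homeomorphism $f$ of a compact hyperbolic surface $S$, in two situations: when $\F_w$ is inessential, and when $f$ is orientation-reversing. The natural strategy is to reduce to the standard form $\varphi$ isotopic to $f$ via Theorem T, using homotopy invariance of the stabilizer (the first \textbf{Fact} in Section 2): $\stab(f,\F_w)\cong\stab(\varphi,\F_{w'})$ for the corresponding $\varphi$-route $w'$, so it suffices to analyze $\varphi$. Then I would examine $\fix(\varphi_{w'})$ according to the three cases of Theorem T.

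\textbf{Case analysis for the standard form.} If $\varphi$ is periodic, then $\varphi^m=\mathrm{id}$, so any $\gamma\in\fix(\varphi_{w'})$ satisfies $\gamma=\varphi_{w'}^m(\gamma)$, but also $\varphi_{w'}^m$ is inner by a power of (the class of) $w'$ traversed $m$ times, which for a periodic isometry one checks is conjugation by an element that must itself be fixed; in a hyperbolic surface group, centralizers of nontrivial elements are cyclic, so $\fix(\varphi_{w'})$ embeds in a cyclic group and has rank $\le 1$. (If $\F_{w'}$ is inessential and nonempty, one should check the relevant fixed point is on a curve that gives the cyclic centralizer; if empty, rank is $0$.) If $\varphi$ is pseudo-Anosov, the fixed point classes are well understood: there are finitely many fixed points, each isolated, and the associated $\varphi_{w'}$ has cyclic (indeed often trivial or $\Z$) fixed subgroup coming from the at most one invariant leaf/prong through the fixed point — again rank $\le 1$, and an inessential such class would force the fixed subgroup even smaller. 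If $\varphi$ is reducible with reducing system $\Gamma$, a fixed point class either lies in a component of $S\setminus\mathcal N(\Gamma)$ — where $\varphi$ restricts to a periodic or pseudo-Anosov map on a subsurface, reducing to the previous cases, with the inclusion-induced map on $\pi_1$ injective so the rank bound is inherited — or it lies in $\mathcal N(\Gamma)$, i.e., it is carried by an annular neighborhood of some $\Gamma_i$, in which case the relevant local group is $\pi_1$ of an annulus, namely $\Z$, so again rank $\le 1$. For the orientation-reversing statement (2) one has to additionally observe that in each of these cases the orientation-reversing hypothesis is compatible with (and in the periodic/reducible cases helps pin down) the cyclic conclusion; in particular an orientation-reversing map has no pseudo-Anosov pieces with orientable foliations causing trouble, and on annular pieces orientation-reversal still leaves $\pi_1=\Z$.

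\textbf{Main obstacle.} The delicate part is the bookkeeping in the reducible case: one must track a fixed point class of $f$ through the isotopy to $\varphi$, then locate it either inside a $\varphi$-component subsurface or inside the invariant tubular neighborhood $\mathcal N(\Gamma)$, and in the subsurface case correctly identify $\stab(\varphi,\F_{w'})$ with the stabilizer of the restricted map (using that $\pi_1$ of an essential subsurface injects into $\pi_1(S)$, so the ``Morphism'' \textbf{Fact} gives $h_*\stab\le\stab$ and injectivity makes it an isomorphism onto its image). A second subtlety is case (1) of the lemma: ``inessential'' must be used to rule out the one configuration where a fixed point class could a priori have rank $2$ — this happens only when the fixed point class is a whole fixed subsurface (e.g. $\varphi=\mathrm{id}$ on a component), but such a class has index equal to the Euler characteristic of that subsurface, which is negative, hence essential; so an inessential class is never a full subsurface, and the rank-$\le 1$ bound survives. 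Handling these boundary/annulus cases carefully, and confirming the index computations that separate ``essential'' from ``inessential,'' is where the real work lies; everything else is an application of homotopy invariance and the cyclicity of centralizers in Fuchsian groups (Lemma \ref{centralize of an infinite-order element}).
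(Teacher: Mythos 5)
Your overall strategy (reduce to the standard form via homotopy invariance, then do a periodic/pseudo-Anosov/reducible case analysis) is in the right spirit, and for part (2) it parallels the paper, which isotopes $f$ to standard form and then quotes the structural results of Jiang--Guo: every nonempty fixed point class of a standard-form homeomorphism is connected, so $\rank\fix(f_w)=\rank\pi_1(\F_w)$, and the list of possible topological types in \cite[Lemma 3.6]{JG} gives the bound. For part (1) the paper simply cites \cite[Theorem 1.1]{JWZ}, which is a substantial theorem valid for arbitrary selfmaps; you are in effect trying to reprove its homeomorphism case from scratch.

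There are, however, genuine gaps in your argument. First, your periodic-case mechanism fails exactly in the cases that matter: $\varphi_{w'}^m$ is conjugation by the element $z=[w'(\varphi\comp w')\cdots(\varphi^{m-1}\comp w')]$, and the centralizer argument gives $\fix(\varphi_{w'})\leq C_{\pi_1(S)}(z)\cong\Z$ only when $z\neq 1$. For a reflection (the basic orientation-reversing periodic example) with $w'$ the constant route at a point of a fixed circle, $z=1$ and the centralizer is all of $\pi_1(S)$; the bound must instead come from the fact that the fixed point set of a periodic isometry is a geodesic $1$-submanifold and that the fixed point class equals a connected component of it --- i.e., precisely the Jiang--Guo input you gesture at but do not supply. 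Second, your claim ``if empty, rank is $0$'' is false: take $\varphi=\mathrm{id}$ and $w'$ a noncontractible loop; then $\F_{w'}$ is empty but $\fix(\varphi_{w'})=C_{\pi_1(S)}([w'])\cong\Z$. Empty classes therefore need an argument (the paper funnels them through part (1), i.e., through \cite{JWZ}). Third, your part (1) reduction --- ``the only configuration with rank $2$ is a full fixed subsurface, which has negative index'' --- presupposes the classification of nonempty fixed point classes of standard-form maps (connectedness plus the list of homeomorphism types of $\F_w$), which is the actual content of \cite[Lemmas 1.2, 2.2, 3.4 and 3.6]{JG}; without citing or proving that classification, the case analysis is not closed (note also that a fixed annulus has $\chi=0$, so ``subsurface $\Rightarrow$ essential'' is not quite right as stated, though it is harmless for the rank bound). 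Until these three points are filled in, the proof is a plausible outline rather than a complete argument.
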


\begin{proof}
Case (1) is clearly following from \cite[Theorem 1.1]{JWZ}.

Now we consider case (2). Since the index and rank of fixed point classes are both homotopy invariants, via an isotopy, we may assume that $f$ is in standard form. If the fixed point class $\F_w$ is empty, then case (2) holds according to case (1). If $\F_w$ is nonempty, it is connected since each Nielsen path of $S$ can be deformed (rel. endpoints) into $\fix f$ by \cite[Lemmas 1.2, 2.2 and 3.4]{JG}. Thus $\rank\fix(f_w)=\rank\pi_1(\F_w)$. Then case (2) can be proved by examining the list given in \cite[Lemma 3.6]{JG}.
\end{proof}

The lemma below is useful in the proof of Proposition \ref{fixed subgp of inessential fpc on circle bundles}.

\begin{lem}\label{fiber orientation-reversing homeo. preserves essential fpc}
Suppose $p:M\to S$ is a compact orientable circle bundle over a compact orientable hyperbolic surface $S$. Suppose $f:M\to M$ is a fiber-preserving homeomorphism that reverse the fiber orientation, and $f':S\to S$ is the induced homeomorphism. Let $w$ be an $f$-route. Then
the $f$-fixed point class $\F_w$ is essential if and only if the $f'$-fixed point class $\F_{p\comp w}$ is essential.
\end{lem}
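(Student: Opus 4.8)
The plan is to relate the index of the $f$-fixed point class $\F_w$ upstairs to that of the $f'$-fixed point class $\F_{p\comp w}$ downstairs by exploiting the circle-bundle structure and the fact that $f$ reverses the fiber orientation. Since $p\comp f = f'\comp p$, the projection $p$ is a morphism from the selfmap $f$ to the selfmap $f'$, so it sends $f$-routes to $f'$-routes and $f$-fixed point classes to $f'$-fixed point classes; in particular $p(\F_w)\subseteq \F_{p\comp w}$. First I would reduce to the case where $f$ and $f'$ are ``nice'': by homotopy invariance of the index (and the Fact on morphisms being compatible with fiber-preserving homotopies), I may isotope $f'$ to standard form in the sense of Theorem T and simultaneously isotope $f$ through fiber-preserving homeomorphisms so that $f$ sits over the standard-form $f'$; this makes $\fix f'$ and $\fix f$ into reasonably controlled subpolyhedra.

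The key computation is local. Over a point $y\in S$ the fiber $p^{-1}(y)\cong S^1$ is mapped by $f$ to the fiber $p^{-1}(f'(y))$, and when $y\in \fix f'$ the map $f$ restricts to a selfmap of the circle $p^{-1}(y)$. Because $f$ reverses the fiber orientation, this restriction is an orientation-reversing selfmap of $S^1$ (degree $-1$), which has index $+1$ at each of its (necessarily two) fixed points, or rather, the relevant contribution: an orientation-reversing circle map is homotopic to a reflection, and the local Lefschetz number of $f$ along the fiber direction is $1-(-1)=2\neq 0$. The upshot is a product/fibration formula for the index: for a component $\F$ of $\fix f$ lying over a component $\F'$ of $\fix f'$, one has $\ind(f,\F) = \ind(f',\F')\cdot L_{\mathrm{fib}}$ where $L_{\mathrm{fib}}$ is the (constant, nonzero) fiberwise contribution. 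More carefully, I would invoke the multiplicativity of the fixed point index in fiber bundles (the Lefschetz number of a fiber-preserving map is the ``integral'' over the base of the fiberwise Lefschetz numbers), applied componentwise to the fixed point classes; since the fiberwise factor is a fixed nonzero integer, $\ind(f,\F_w)$ and $\ind(f',\F_{p\comp w})$ vanish simultaneously. Combined with $p(\F_w)=\F_{p\comp w}$ (and that $\F_w$ is empty iff $\F_{p\comp w}$ is, which follows by lifting Nielsen paths along the fibration since $p$ is a fibration with connected fibers), this gives ``$\F_w$ essential $\iff$ $\F_{p\comp w}$ essential.''

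I expect the main obstacle to be making the fiberwise index formula rigorous in this non-product, not-necessarily-connected-$\varphi$-component setting: one must check that after the reduction to standard form the fixed point set of $f$ decomposes compatibly with that of $f'$ (the components of $\fix f$ fiber over components of $\fix f'$), handle the reducing curves and their tubular neighborhoods where $f'$ is only defined up to the bookkeeping of Theorem T(3), and verify that the fiberwise Lefschetz number is genuinely a nonzero constant along each base component (it could a priori jump, but orientation-reversal forces degree $-1$ on every fiber, pinning it to $2$). An alternative, and probably cleaner, route is to avoid local index computations entirely: pass to a suitable finite characteristic cover $q:\widehat M\to M$ trivializing the bundle up to the needed extent (using Lemma \ref{fixed point class lifting} and Lemma \ref{lifting of route} to transfer essentiality up and down), where on a product $\widehat S\times S^1$ the fiber-preserving map literally becomes $(\widehat f',\text{reflection-type map on }S^1)$ and the product formula for the Lefschetz/Nielsen index is classical. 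Either way, the orientation-reversing hypothesis on the fiber is exactly what guarantees the fiberwise factor never vanishes, which is the crux of the equivalence.
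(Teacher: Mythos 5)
Your approach is essentially the paper's: after a fiber-preserving isotopy putting $f'$ in standard form, the paper simply cites Jiang--Wang (Lemma 2.9 of [JW]) for the statement that $\ind(f,\F_w)$ equals either $\ind(f',\F_{p\comp w})$ or $2\ind(f',\F_{p\comp w})$, which is precisely the fiberwise index relation you derive from the orientation-reversing (degree $-1$) circle map having Lefschetz number $2$. The technical points you flag --- the compatibility of $\fix f$ with $\fix f'$ after reduction to standard form, and the bookkeeping near the reducing curves --- are exactly what that cited lemma takes care of, so your outline is correct and follows the same route.
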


\begin{proof}
Since the index of fixed point classes is a homotopy invariant, via a fiber-preserving homotopy, we can assume that the induced homeomorphism $f'$ is in standard form by \cite[Lemma 2.7]{JW}. Then $\ind(f,\F_w)$ equals either $\ind(f',\F_{p\comp w})$ or $2\ind(f',\F_{p\comp w})$ by \cite[Lemma 2.9]{JW}. Thus the conclusion holds.
\end{proof}

\begin{prop}\label{fixed subgp of inessential fpc on circle bundles}
Suppose $p:M\rightarrow S$ is a compact orientable circle bundle over an orientable hyperbolic surface $S$, $f:M\rightarrow M$
is a fiber-preserving homeomorphism that reveres the orientation of $M$, and $f': S\rightarrow S$ is the induced homeomorphism of $f$. Let $w$ be an $f$-route corresponding to an inessential fixed point class $\F_w$. Then

{\rm(1)} $\fix(f'_{p\comp w})$ is trivial or the free cyclic group $\mathbb{Z}$;

{\rm(2)} $\fix(f_w)$ is trivial or a free abelian group of rank $\leq 2$.
\end{prop}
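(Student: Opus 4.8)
The plan is to exploit the central extension coming from the fibration together with the morphism property of $p$. By Lemma~\ref{fiber oriented coherently} and the homotopy exact sequence of $p:M\to S$ (recall $S$ is aspherical, being hyperbolic), there is a central short exact sequence
\[
1\longrightarrow \langle t\rangle\longrightarrow \pi_1(M)\xrightarrow{\;p_*\;}\pi_1(S)\longrightarrow 1,
\]
with $\langle t\rangle=\ker p_*\cong\Z$. Since $p\comp f=f'\comp p$, the map $p$ is a morphism from $f$ to $f'$, so $p_*\comp f_w=f'_{p\comp w}\comp p_*$; in particular $f_w(\ker p_*)\subseteq\ker p_*$, and as $f_w$ is an automorphism it restricts to an automorphism of $\langle t\rangle\cong\Z$. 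Thus $f_w(t)=t$ or $f_w(t)=t^{-1}$, and because $t$ is central (so conjugating $f_w$ by an inner automorphism leaves $f_w(t)$ unchanged) this sign is independent of the chosen route $w$; it records whether $f$ preserves or reverses the fiber orientation. The remaining input to record first is the elementary orientation dichotomy for the oriented circle bundle $M$ over the oriented surface $S$: a fiber-preserving homeomorphism $f$ reverses the orientation of $M$ if and only if exactly one of the two maps ``$f$ on the fibers'' and ``$f'$ on $S$'' is orientation-reversing. This splits the argument into two cases.

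\emph{Part (1).} If $f$ preserves the fiber orientation, then by the dichotomy $f'$ is orientation-reversing, so Lemma~\ref{fixed subgp of inessential fpc on surface}(2) gives $\rank\fix(f'_{p\comp w})\le 1$. If instead $f$ reverses the fiber orientation, then Lemma~\ref{fiber orientation-reversing homeo. preserves essential fpc} applies and shows $\F_{p\comp w}$ is inessential (because $\F_w$ is), whence Lemma~\ref{fixed subgp of inessential fpc on surface}(1) again gives $\rank\fix(f'_{p\comp w})\le 1$. In either case $\fix(f'_{p\comp w})$ is a subgroup of rank $\le 1$ of the torsion-free surface group $\pi_1(S)$, hence trivial or $\Z$.

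\emph{Part (2).} The morphism property gives $p_*(\fix(f_w))\subseteq\fix(f'_{p\comp w})$, which is trivial or $\Z$ by Part (1). Consider the exact sequence
\[
1\longrightarrow \fix(f_w)\cap\langle t\rangle\longrightarrow \fix(f_w)\longrightarrow p_*(\fix(f_w))\longrightarrow 1.
\]
If $f$ reverses the fiber orientation, then $f_w(t^n)=t^{-n}$ forces $\fix(f_w)\cap\langle t\rangle$ to be trivial, so $p_*$ embeds $\fix(f_w)$ into $\fix(f'_{p\comp w})$ and $\fix(f_w)$ is trivial or $\Z$. If $f$ preserves the fiber orientation, then $\fix(f_w)\cap\langle t\rangle=\langle t\rangle\cong\Z$, which is central in $\fix(f_w)$; thus $\fix(f_w)$ is a central extension of a subgroup of $\Z$ by $\Z$, and since the quotient $p_*(\fix(f_w))$ is free abelian the extension splits, forcing $\fix(f_w)$ to be abelian, hence $\Z$ or $\Z\oplus\Z$. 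In all cases $\fix(f_w)$ is trivial or free abelian of rank $\le 2$.

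The step I expect to require the most care is not any single computation but the orientation bookkeeping: pinning down the dichotomy relating orientation-reversal on $M$, on $S$, and on the fibers, and verifying that the fiber-orientation behavior $f_w(t)=t^{\pm1}$ is genuinely route-independent so that the case split is well posed. Once this is settled, Part (1) is a direct appeal to Lemmas~\ref{fixed subgp of inessential fpc on surface} and~\ref{fiber orientation-reversing homeo. preserves essential fpc}, and Part (2) is the short exact-sequence argument above.
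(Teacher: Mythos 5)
Your proposal is correct and follows essentially the same route as the paper: the same orientation dichotomy between the fiber direction and the base, the same appeal to Lemma~\ref{fiber orientation-reversing homeo. preserves essential fpc} and the two cases of Lemma~\ref{fixed subgp of inessential fpc on surface} for Part (1), and the same use of the central extension $1\to\langle t\rangle\to\pi_1(M)\to\pi_1(S)\to 1$ for Part (2). Your exact-sequence case split in Part (2) is a slight refinement (showing $\fix(f_w)$ is actually cyclic when $f$ reverses the fiber orientation), whereas the paper simply observes $\fix(f_w)\leq p_*^{-1}\fix(f'_{p\comp w})\cong\fix(f'_{p\comp w})\times\langle t\rangle$ and passes to subgroups of $\Z\oplus\Z$; both are valid.
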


\begin{proof}
(1) Note that all fibers can be coherently oriented since $M$ is a connected orientable circle bundle over an orientable surface by Lemma \ref{fiber oriented coherently}. So $f$ is in one of the two cases below.

Case (i). $f$ is fiber orientation-reversing and $f':S\rightarrow S$ is orientation-preserving.

Since the $f$-fixed point class $\F_w$ is inessential, the $f'$-fixed point class $\F_{p\comp w}$ is also inessential by Lemma \ref{fiber orientation-reversing homeo. preserves essential fpc}. Then $\rank\fix(f'_{p\comp w})\leq 1$ by Lemma \ref{fixed subgp of inessential fpc on surface}(1).

Case (ii). $f$ is fiber orientation-preserving and $f':S\rightarrow S$ is orientation-reversing. Then $\rank\fix(f'_{p\comp w})\leq 1$ by Lemma \ref{fixed subgp of inessential fpc on surface}(2).

Hence conclusion (1) holds in both case (i) and case (ii) by the fact that every nontrivial element of an orientable surface group has infinite order.\\

(2) Let $x=w(0)$. Since $p\comp f=f'\comp p: M\to S$, there is the following commutative diagram on fundamental groups:
$$\xymatrix{
\pi_1(M,x)\ar[d]_{p_*}\ar[r]^{f_w} &\pi_1(M,x)\ar[d]^{p_*}\\
\pi_1(S,p(x))\ar[r]^{f'_{p\comp w}} & \pi_1(S,p(x))
}$$
where
$$p_*:\pi_1(M,x)\to \pi_1(S,p(x))\cong\pi_1(M,x)/\langle t\rangle$$
is the quotient map and $\langle t\rangle$ is the fiber of $\pi_1(M,x)$ associated with the fibration $p$.
Hence
$$p_*\fix(f_w)\leq \fix(f'_{p\comp w}).$$
Since $\langle t\rangle$ is in the center of $\pi_1(M,x)$, and $\fix(f'_{p\comp w})$ is trivial or the infinite cyclic group $\Z$ by conclusion (1), we have
$$\fix(f_w)\leq p_*^{-1}\fix(f'_{p\comp w}) \cong\fix(f'_{p\comp w})\times \langle t\rangle,$$
which is isomorphic to the free abelian group $\Z$ or $\Z\oplus\Z$. Hence conclusion (2) holds by the fact that every nontrivial subgroup of the free abelian group $\Z\oplus\Z$ is a free abelian group of rank $\leq 2$.
\end{proof}
%-------------------------------------------------------------------------------------------------------------------------------------------------------------
\section{Proof of Theorem \ref{main thm}}

In this section, we give the proof of Theorem \ref{main thm}.

Let $M$ be a compact orientable Seifert manifold with hyperbolic orbifold $B_M$, and $p:M\rightarrow B_M$ be the Seifert fibration. If $\tau$ is a regular fiber in this fibration, then
$$\langle t\rangle=\im(\pi_1(\tau,x)\to\pi_1(M,x))$$
is a infinite cyclic subgroup. We call $\langle t\rangle$ the $fiber$ of $\pi_1(M,x)$ (associated with the fibration $p$). In fact, for each Seifert fibration of $M$, there exists a unique fiber of $\pi_1(M,x)$. It is well known that the cyclic subgroup $\langle t\rangle$ is normal in $\pi_1(M,x)$ and is center when $M$ and $B_M$ are both orientable. The Seifert fibration $p$ induced a quotient homomorphism $p_*:\pi_1(M,x)\to \pi_1(M,x)/\langle t\rangle\cong \pi_1(B_M)$ on fundamental groups (see \cite[Chapter II, \S4]{JS} or \cite[Chapter 12]{He}).

\begin{prop}\label{main result of inenssential case}
Let $M$ be a compact orientable Seifert manifold with hyperbolic orbifold $B_M$, and $f:M\rightarrow M$ be an orientation-reversing homeomorphism. Suppose $f_w: \pi_1(M,x)\rightarrow \pi_1(M,x)$ is the automorphism induced by $f$, where $w$ is an $f$-route with $x=w(0)$. If the fixed point class $\F_w$ corresponding to $w$ is inessential, then $\rank \fix(f_w)\leq 3.$
\end{prop}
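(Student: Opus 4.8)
The plan is to reduce the general Seifert case to the circle-bundle case treated in Proposition 4.5 by passing to a finite characteristic cover, and then to track the fixed subgroup under the covering map. First I would invoke the structure of Seifert manifolds with hyperbolic orbifold: since $B_M$ is a hyperbolic orbifold, it is orbifold-covered by a hyperbolic surface, and correspondingly there is a finite cover $\widetilde M$ of $M$ which is an honest orientable circle bundle over an orientable hyperbolic surface $S$ (pull back along a manifold cover of the orbifold that kills all cone points; one may take it orientable). By the remark preceding Lemma~\ref{lifting of route}, after enlarging $\widetilde M$ we may assume the covering $p:\widetilde M\to M$ is characteristic, so every automorphism of $\pi_1(M)$ — in particular $f_w$ — lifts, and more precisely by Lemma~\ref{lifting of route} the given $f$-route $w$ lifts to an $\tilde f$-route $\tilde w$ for some lift $\tilde f$ of $f$.

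Next I would arrange that $\tilde f$ is fiber-preserving and orientation-reversing. The fiber $\langle t\rangle$ of $\pi_1(M)$ associated with the Seifert fibration is characteristic (it is the unique maximal normal cyclic subgroup, or one invokes that the fibration is unique up to isotopy for these geometries, so any homeomorphism is isotopic to a fiber-preserving one); hence $f$ may be taken fiber-preserving, and then so is $\tilde f$ with respect to the pulled-back circle-bundle structure $\widetilde M\to S$. Since $p$ is a covering and $f$ reverses the orientation of $M$, the lift $\tilde f$ reverses the orientation of $\widetilde M$. Now apply Lemma~\ref{fixed point class lifting}: because $\F_w$ is inessential, so is $\F_{\tilde w}$. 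Therefore Proposition~\ref{fixed subgp of inessential fpc on circle bundles}(2) applies to $\tilde f$ and $\tilde w$, giving that $\fix(\tilde f_{\tilde w})$ is trivial or free abelian of rank $\le 2$.

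It remains to pass from the rank bound on $\fix(\tilde f_{\tilde w})$ to one on $\fix(f_w)$. Here I would use that $p_*:\pi_1(\widetilde M,\tilde x)\to\pi_1(M,x)$ is injective with finite-index image, and that (via the covering) $p_*$ intertwines $\tilde f_{\tilde w}$ and $f_w$, so $p_*\fix(\tilde f_{\tilde w})=\fix(f_w)\cap p_*\pi_1(\widetilde M)$; that is, $H:=\fix(\tilde f_{\tilde w})$ is a finite-index subgroup of $G:=\fix(f_w)$ (finite index because $p_*\pi_1(\widetilde M)$ has finite index in $\pi_1(M)$). Thus $G$ is a finitely generated group with a free abelian normal subgroup of rank $\le 2$ of finite index — equivalently $G$ is virtually abelian of rank $\le 2$. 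I would then argue that the fiber $\langle t\rangle\le\pi_1(M)$ lies in $G$ or controls it: $G$ contains $G\cap\langle t\rangle$ as a central subgroup, $G/(G\cap\langle t\rangle)$ embeds in $\pi_1(B_M)$ as a virtually cyclic subgroup, and by Lemma~\ref{center of Fuchsian gp}(1) together with Proposition~\ref{roots of cyclic subgp of Fuchsian gps} such a subgroup of a Fuchsian group is metacyclic. Hence $G$ has a cyclic central (so normal) subgroup with metacyclic quotient, and Lemma~\ref{subgp of metacyclic gp}(2) gives $\rank G\le 3$.

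The main obstacle is the last paragraph: controlling $G=\fix(f_w)$ itself rather than its finite-index subgroup $H$. One must show $\fix(f_w)$ cannot be, say, a surface group or free group of large rank even though its finite-index subgroup $\fix(\tilde f_{\tilde w})$ is abelian of rank $\le 2$ — this uses crucially that $\fix(f_w)\le\pi_1(M)$, that $\langle t\rangle$ is central, and the classification of (virtually cyclic) Fuchsian subgroups, i.e. exactly Lemmas~\ref{center of Fuchsian gp}, \ref{subgp of metacyclic gp} and Proposition~\ref{roots of cyclic subgp of Fuchsian gps}. A secondary technical point is to justify the fiber-preservation of $f$ (and hence of $\tilde f$) compatibly with an \emph{honest} circle-bundle structure on the cover; this is where the hypothesis "hyperbolic orbifold" (ruling out the exceptional Seifert geometries) and the uniqueness of the Seifert fibration are used.
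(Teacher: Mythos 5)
Your proposal is correct and shares the overall skeleton of the paper's proof: pass to a finite characteristic cover $q':\widetilde M\to M$ that is an orientable circle bundle over an orientable hyperbolic surface (via Lemma \ref{fiber-preserving homeomorphism of Seifer mfd} and a finite orientable surface cover of $B_M$), lift the route (Lemma \ref{lifting of route}), transfer inessentiality (Lemma \ref{fixed point class lifting}), and close with Proposition \ref{roots of cyclic subgp of Fuchsian gps} and Lemma \ref{subgp of metacyclic gp}(2). The one step where you genuinely diverge is the middle: you apply Proposition \ref{fixed subgp of inessential fpc on circle bundles}(2) upstairs, so that $\fix(\tilde f_{\tilde w})\cong\fix(f_w)\cap q'_*\pi_1(\widetilde M)$ is free abelian of rank $\le 2$ and of finite index in $\fix(f_w)$, and then argue that $p_*\fix(f_w)\le\pi_1(B_M)$ is virtually cyclic and hence metacyclic; the paper instead uses part (1) of that proposition (the fixed subgroup of the induced surface homeomorphism is trivial or $\Z$) and verifies directly that $d$-th powers of elements of $p_*\fix(f_w)$ land in that cyclic group, which is verbatim the hypothesis of Proposition \ref{roots of cyclic subgp of Fuchsian gps}. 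Your finite-index observation is the same underlying fact ($\alpha^d\in q'_*\pi_1(\widetilde M)$ since the characteristic subgroup is normal of index $d$) in different packaging, so both routes work. Two small points to tighten: (i) to feed your virtually cyclic group $P=p_*\fix(f_w)$ into Proposition \ref{roots of cyclic subgp of Fuchsian gps} you should note that $p_*\bigl(\fix(f_w)\cap q'_*\pi_1(\widetilde M)\bigr)$ is a \emph{normal} finite-index subgroup of $P$ and is cyclic (abelian subgroups of a hyperbolic Fuchsian group are cyclic, by Lemma \ref{center of Fuchsian gp} together with the argument of Proposition \ref{roots of cyclic subgp of Fuchsian gps}), so that $P^e\subseteq\langle a\rangle$ for $e$ the index; (ii) the proposition does not assume $B_M$ is orientable, so $\langle t\rangle$ is only guaranteed to be normal, not central, in $\pi_1(M)$ --- this is harmless, since Lemma \ref{subgp of metacyclic gp}(2) only requires a cyclic normal subgroup.
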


To prove Proposition \ref{main result of inenssential case}, we need the following useful lemma which is from \cite[Theorem 3.11]{JWW}.

\begin{lem}\label{fiber-preserving homeomorphism of Seifer mfd}
Suppose $M$ is a compact orientable Seifert manifold and $p:
M\rightarrow B_M$ is a Seifert fibration with hyperbolic orbifold
$B_M$. Then any homeomorphism on M is isotopic to a fiber-preserving homeomorphism with respect to this fibration.
\end{lem}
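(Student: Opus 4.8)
The plan is to exploit the essential uniqueness, up to isotopy, of the Seifert fibration when the base orbifold is hyperbolic. The idea is that any homeomorphism $f$ transports the given fibration $p$ to a second Seifert fibration whose fibers are the $f$-images of the fibers of $p$; if these two fibrations are isotopic, then correcting $f$ by the realizing isotopy produces a fiber-preserving homeomorphism lying in the same isotopy class as $f$.

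First I would record the algebraic rigidity that the hyperbolicity of $B_M$ provides. As noted at the start of this section, when $M$ and $B_M$ are orientable the fiber subgroup $\langle t\rangle$ lies in the center of $\pi_1(M)$, and the quotient $\pi_1(M)/\langle t\rangle\cong\pi_1(B_M)$ is an infinite Fuchsian group; since $B_M$ is hyperbolic this Fuchsian group has trivial center (by Lemma \ref{center of Fuchsian gp}, a nontrivial center would force an abelian or Klein-bottle quotient, contradicting hyperbolicity). Hence $\langle t\rangle$ is exactly the center of $\pi_1(M)$, so it is characteristic and is preserved by the induced automorphism $f_\pi$. This is precisely the feature that fails for the small Seifert manifolds over non-hyperbolic orbifolds, where additional fibrations exist, and it explains why the hypothesis on $B_M$ is indispensable.

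Next I would upgrade this to a geometric statement. Pushing $p$ forward by $f$ yields a Seifert fibration $p'$ of $M$ whose regular fiber again generates the center, so $p$ and $p'$ determine the same fiber subgroup up to conjugacy. The crux is then the uniqueness theorem for Seifert fibrations in the hyperbolic-base case: any two Seifert fibrations of such an $M$ are isotopic. Granting this, there is an ambient isotopy $\{h_t\}_{t\in I}$ with $h_0=\mathrm{id}$ and $h_1$ carrying each fiber of $p'$ onto a fiber of $p$. Then $h_1\circ f$ sends fibers of $p$ to fibers of $p'$ and thence to fibers of $p$, so it is fiber-preserving; and since $h_1$ is isotopic to the identity, $h_1\circ f$ is isotopic to $f$, which is exactly the conclusion.

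I expect the uniqueness of the Seifert fibration to be the real obstacle, everything else being bookkeeping with the isotopy. The cleanest route to it passes through fundamental groups: one realizes the center-preserving automorphism $\bar f$ induced on the Fuchsian group $\pi_1(B_M)$ by a homeomorphism of the orbifold $B_M$, respecting the cone-point data, and lifts it to a fiber-preserving homeomorphism $g$ of $M$ inducing, up to conjugacy, the same automorphism of $\pi_1(M)$ as $f$; since $M$ is Haken, Waldhausen's rigidity then promotes the resulting homotopy between $f$ and $g$ to an isotopy. Care is needed at the exceptional (singular) fibers and, when $M$ has toroidal boundary, along $\partial M$, and one must check that the realizing homeomorphism of $B_M$ can be chosen to permute cone points of equal order; but these are exactly the points controlled by the orbifold and Fuchsian-group preliminaries already in place, and the statement may be taken from \cite[Theorem 3.11]{JWW}.
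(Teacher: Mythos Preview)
The paper does not give its own proof of this lemma at all: it simply states the result and attributes it to \cite[Theorem 3.11]{JWW}. Your proposal supplies a correct outline of the standard argument---the fiber subgroup is the center of $\pi_1(M)$ when $B_M$ is hyperbolic, hence characteristic; the pushed-forward fibration has the same fiber subgroup; uniqueness of the Seifert fibration up to isotopy then lets one correct $f$ by an isotopy to make it fiber-preserving, with Waldhausen rigidity handling the promotion from homotopy to isotopy---and you end by pointing to the same reference. So your write-up is strictly more than what the paper does, but entirely compatible with it; for the purposes of this paper a one-line citation suffices.
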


\begin{proof}[Proof of Proposition \ref{main result of inenssential case}]

By Lemma \ref{fiber-preserving homeomorphism of Seifer mfd}, $f$ can be isotopic to a fiber-preserving homeomorphism. Since the index $\ind(f,\F_w)$ and rank $\rank \fix(f_w)$ of fixed point class are homotopy invariants (see Fact (homotopy invariance) in Section 2), we can assume that $f$ is fiber-preserving and the base point $x$ is in a regular fiber $\tau$ in the following.

Note that $B_M$ is hyperbolic, then there is a finite covering
$q: S\rightarrow B_M$ of orbifold such that $S$ is a compact orientable hyperbilic surface by \cite [Theorem 2.5]{S}. The pull-back of the
Seifert fibration $p: M\rightarrow B_M$ via $q$ gives a finite covering
manifold $q': \widetilde M\rightarrow M$ with fibration
$p':\widetilde M\rightarrow B_{\widetilde M}=S$. After passing to further finite covering if necessary, we may assume that $q'$ is
a characteristic covering with finite sheets $d$. Since $S$ is an orientable surface, the fibration $p'$ is an orientable
circle bundle over the orientable surface $S$. So there is the following commutative diagram:
$$\xymatrix{
\widetilde M\ar[d]_{p'}\ar[r]^{q'}& M\ar[d]^p\\
S\ar[r]^q &B_M
}$$

Pick a point $\tilde x\in q'^{-1}(x)$ and a lifting $\tilde w$ of $w$ with $\tilde w(0)=\tilde x$. Since $q'$ is a finite characteristic covering, following from Lemma \ref{lifting of route}, there is a lifting $\tilde f$ of $f$ such that $\tilde w$ is an $\tilde f$-route. Let $\tilde f': S\to S$ denote the induced homeomorphism of $\tilde f$ on the orbifold $S$. Therefore, there is the following commutative diagram on fundamental groups:
$$\xymatrix{
& \pi_1(S,p'(x)) \ar[rr]^{\tilde f'_{p'\comp\tilde w}}\ar'[d][dd]_<<<{q_*}& &\pi_1(S,p'(x))\ar[dd]_{q_*}\\
\pi_1(\widetilde M,\tilde x)\ar[rr]^>>>>>>>>>>>{\tilde f_{\tilde w}}\ar[ur]^{p'_*}\ar[dd]_{q'_*}& &\pi_1(\widetilde M,\tilde x)\ar[ur]^{p'_*}\ar[dd]_<<<<<<{q'_*}\\
& \pi_1(B_M) \ar'[r][rr]& &\pi_1(B_M)\\
\pi_1(M,x) \ar[ur]^{p_*}\ar[rr]^{f_w}& &\pi_1(M,x)\ar[ur]^{p_*}
}$$

Note that $f:M\to M$ is a fiber-preserving homeomorphism that reverse the orientation of $M$, then the lifting $\tilde f: \widetilde M\to \widetilde M$ is a fiber-preserving homeomorphism that reverse the orientation of $\widetilde M$. Moreover, since the $f$-fixed point class $\F_w$ is inessential, the $\tilde f$-fixed point class $\F_{\tilde w}$ is also inessential by Lemma \ref{fixed point class lifting}. Therefore, $\fix(\tilde f'_{p'\comp\tilde w})$ is trivial or the free cyclic group $\Z$ according to conclusion (1) of Proposition \ref{fixed subgp of inessential fpc on circle bundles}. We claim that

\begin{claim}\label{claim}
Let $H=p_*\fix(f_w)\subseteq \pi_1(B_M)$ and $H^d=\{h^d|h\in H\}$. Then
$$H^d\subseteq q_*\fix(\tilde f'_{p'\comp\tilde w})\leq\pi_1(B_M)$$
where $q_*\fix(\tilde f'_{p'\comp\tilde w})$ is trivial or the free cyclic group $\Z$.
\end{claim}

\noindent\emph{Proof.}
Let $\alpha\in \fix(f_w)\leq \pi_1(M,x)$. Recall that $q':\widetilde M\to M$ is a characteristic covering of sheets $d$, then
$$q'_*:\pi_1(\widetilde M,\tilde x)\to \pi_1(M,x)$$
is an injective homomorphism and the image $q'_*\pi_1(\widetilde M,\tilde x)$ is a characteristic subgroup of $\pi_1(M,x)$ with index $d$.
Hence the power
$$\alpha^d\in q'_*\pi_1(\widetilde M,\tilde x)\leq\pi_1(M,x),$$
furthermore, following from the commutative diagram, we have
$$\tilde f_{\tilde w}q'^{-1}_*(\alpha^d)=q'^{-1}_*f_w(\alpha^d)=q'^{-1}_*(\alpha^d),$$
and
$$\tilde f'_{p'\comp\tilde w}p'_*(q'^{-1}_*(\alpha^d))=p'_*\tilde f_{\tilde w}(q'^{-1}_*(\alpha^d))=p'_*q'^{-1}_*(\alpha^d).$$
Namely,
$$p'_*q'^{-1}_*(\alpha^d)\in \fix(\tilde f'_{p'\comp\tilde w}).$$
Then
$$(p_*(\alpha))^d=p_*(\alpha^d)=q_*p'_*q'^{-1}_*(\alpha^d)\in q_*\fix(\tilde f'_{p'\comp\tilde w})\leq\pi_1(B_M).$$
Hence
$$H^d\subseteq q_*\fix(\tilde f'_{p'\comp\tilde w})\leq\pi_1(B_M).$$
Note that $q_*$ is induced by the covering $q$, then $q_*$ is an injective homomorphism, and $q_*\fix(\tilde f'_{p'\comp\tilde w})$ is isomorphic to $\fix(\tilde f'_{p'\comp\tilde w})$ which is trivial or the free cyclic group $\Z$. Hence the Claim holds.

Now we will finish the proof of Proposition \ref{main result of inenssential case}.

By Proposition \ref{roots of cyclic subgp of Fuchsian gps} and Claim \ref{claim}, $H=p_*\fix(f_w)$ is a metacyclic group. Since
$$p_*:\pi_1(M,x)\to \pi_1(M,x)/\langle t\rangle =\pi_1(B_M)$$
is the quotient map, we have $\fix(f_w)\leq p_*^{-1}(H)$ which is an extension of the metacyclic group $H$ by the infinite cyclic group $\langle t\rangle$, namely, $\fix(f_w)$ is a subgroup of the group $p_*^{-1}(H)$ which has a cyclic normal subgroup $\langle t\rangle$ such that the quotient group $H$ is metacyclic. So by Lemma \ref{subgp of metacyclic gp}, $\rank\fix(f_w)\leq 3$.
\end{proof}

Now we give the proof of Theorem \ref{main thm}. Firstly, we have the following lemma on the ranks of Seifert manifold groups.

\begin{lem}\label{rank of Seifert mfd}
Suppose $M$ is a compact orientable Seifert manifold with hyperbolic orbifold $B_M$. Then $\rank\pi_1(M)\geq 2.$
\end{lem}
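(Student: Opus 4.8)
The plan is to exploit the quotient homomorphism $p_*\colon\pi_1(M)\to\pi_1(M)/\langle t\rangle\cong\pi_1(B_M)$ coming from the Seifert fibration, which was recalled just before the statement. Since $p_*$ is surjective, any generating set of $\pi_1(M)$ maps onto a generating set of $\pi_1(B_M)$, so $\rank\pi_1(M)\ge\rank\pi_1(B_M)$. Hence it suffices to prove that $\rank\pi_1(B_M)\ge 2$, i.e.\ that the Fuchsian group $\pi_1(B_M)$ is not cyclic.

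To see this I would argue by contradiction: suppose $\pi_1(B_M)$ is cyclic, hence abelian. Because $B_M$ is hyperbolic, it is orbifold-covered by a hyperbolic surface $S$, and (as recalled in Section 3) $\pi_1(B_M)$ contains a subgroup isomorphic to $\pi_1(S)$. Since $\chi(S)<0$, the surface $S$ is neither a sphere, disk, annulus, M\"obius band, torus nor Klein bottle, so $\pi_1(S)$ is either a non-abelian free group or a closed surface group of genus $\ge 2$; in either case it is infinite and non-abelian. A cyclic group, being abelian, cannot contain such a subgroup, a contradiction. Therefore $\pi_1(B_M)$ is non-cyclic, so $\rank\pi_1(B_M)\ge 2$, and consequently $\rank\pi_1(M)\ge 2$.

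Alternatively, one could invoke Lemma \ref{center of Fuchsian gp}: a cyclic Fuchsian group is either finite cyclic or $\Z$, and neither can be $\pi_1(B_M)$ — the former because $\pi_1(B_M)$ is infinite (it surjects onto nothing trivial: it contains $\pi_1(S)$), the latter because $\Z$ is abelian while $\pi_1(B_M)\supseteq\pi_1(S)$ is not. I expect no real obstacle here; the only point requiring a little care is making sure all degenerate cases of the covering hyperbolic surface $S$ are excluded, which the hypothesis $\chi(B_M)<0$ (hence $\chi(S)<0$) handles uniformly.
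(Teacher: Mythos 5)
Your argument is correct, and it is genuinely different from what the paper does: the paper's proof of Lemma \ref{rank of Seifert mfd} is a one-line citation of \cite[Proposition 4.3]{Z}, which classifies the ranks of Seifert manifold groups, whereas you give a short self-contained argument using only facts already set up in the text. Your two steps are both sound: the quotient map $p_*\colon\pi_1(M)\to\pi_1(M)/\langle t\rangle\cong\pi_1(B_M)$ recalled at the start of Section 5 is surjective, so $\rank\pi_1(M)\ge\rank\pi_1(B_M)$; and Section 3 explicitly records that a hyperbolic orbifold $B_M$ is (finitely, by \cite[Theorem 2.5]{S}) covered by a compact hyperbolic surface $S$, so $\pi_1(B_M)$ contains a copy of $\pi_1(S)$, which is non-abelian since $\chi(S)<0$ forces $S$ to be either a closed surface of negative Euler characteristic or a compact surface with boundary whose fundamental group is free of rank $1-\chi(S)\ge 2$. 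A cyclic group has no non-abelian subgroup, so $\rank\pi_1(B_M)\ge 2$. What your route buys is independence from the external classification result; what the paper's citation buys is brevity and, in principle, sharper information (the cited proposition describes the ranks precisely, not just a lower bound of $2$). The only cosmetic quibble is that in your alternative paragraph the phrase ``it surjects onto nothing trivial'' is garbled --- the intended point, that $\pi_1(B_M)$ is infinite and non-abelian because it contains $\pi_1(S)$, is already established in your main argument, so the alternative via Lemma \ref{center of Fuchsian gp} is redundant but harmless.
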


\begin{proof}
It is clear from \cite[Proposition 4.3]{Z} which gives a description on the ranks of Seifert manifold groups.
\end{proof}

\begin{proof}[Proof of Theorem \ref{main thm}]
Let $f_\pi=f_w: \pi_1(M,x)\rightarrow \pi_1(M,x)$ be induced by some $f$-route $w$ with $x=w(0)$.

If $\F_w$ is essential, then $\rank \fix(f_w)<2\rank\pi_1(M)$ by Proposition \ref{rank of essential fpc}.

If $\F_w$ is inessential, then $\rank \fix(f_w)\leq 3$ by Proposition \ref{main result of inenssential case}. Recall that the orbifold $B_M$ of $M$ is hyperbolic, then $\rank \pi_1(M)\geq 2$ according to Lemma \ref{rank of Seifert mfd}. Therefore, we have
$$\rank \fix(f_w)<2\rank \pi_1(M).$$
The proof is finished.
\end{proof}

\section{Examples and Questuins}

Now we give some examples and questions.

\begin{exam}
Let $S_n$ be a closed orientable surface of genus $n\geq 2$. Define an orientation-reversing homeomorphism $f$ as follows:
$$f=f_1\times f_2:~ S_n\times S^1\to S_n\times S^1,$$
where $f_1: S_n\to S_n$ is a reflection on a simple closed curve $\gamma$, and $f_2: S^1\to S^1$ is a rotation. Then all the fixed point classes of $f$ are inessential, and $f$ induces an automorphism $f_{\pi}$ of $\pi_1(S_n\times S^1)$ such that
$$\fix(f_{\pi})=\pi_1(\gamma\times S^1)\cong \Z\oplus\Z.$$
Namely, there is an inessential fixed point class which has $\rank\fix(f_{\pi})=2$.
\end{exam}

\begin{ques}
Is there an orientation-reversing homeomorphism $f$ of a Seifert manifold $M$ whose inessential fixed point class has $\rank\fix(f_{\pi})=3$? Namely, is the bound $3$ in Proposition \ref{main result of inenssential case} sharp?
\end{ques}

% ------------------------------------------------------------------------------------------------------------

%-----------------------------------------------------------------------------------------------------------

%\textbf{Acknowledgement.} The author would like to thank Professor Boju Jiang and Shicheng Wang for valuable communications and suggestions. The author also wish to thank the referee for the helpful and detailed comments. The author is partially supported by the Fundamental Research Funds for the Central Universities.

%References--------------------------------------------------------------------------------------------------------------------------

\end{document}